\numberwithin{equation}{section}
\newtheorem{theorem}{Theorem}[section]
\newtheorem{lemma}{Lemma}[section]
\newtheorem{proposition}{Proposition}[section]
\newtheorem{definition}{Definition}[section]
\newtheorem{remark}{Remark}[section]
\newcommand{\bal}{\begin{align}}
\newcommand{\bbal}{\begin{align*}}
\newcommand{\beq}{\begin{equation}}
\newcommand{\eeq}{\end{equation}}
\newcommand{\bca}{\begin{cases}}
\newcommand{\eca}{\end{cases}}
\def\div{\mathord{{\rm div}}}
\newcommand{\pa}{\partial}
\newcommand{\fr}{\frac}
\newcommand{\na}{\nabla}
\newcommand{\De}{\Delta}
\newcommand{\cd}{\cdot}
\newcommand{\w}{\widetilde{w}}
\newcommand{\su}{\widetilde{u}}
\newcommand{\R}{\mathbb{R}}
\newcommand{\ro}{\rho}
\newcommand{\T}{\mathcal{T}}
\newcommand{\sL}{W_{2,p}}
\newcommand{\D}{\Delta}
\newcommand{\g}{\big}
\begin{document}
\title{On the continuity of the solution map of the Euler-Poincaré equations in Besov spaces}

\author{Min Li$^{1}$ \footnote{E-mail: limin@jxufe.edu.cn} \\
\small $^1$ Department of Mathematics, Jiangxi University of Finance and Economics, Nanchang, 330032, China
}

\date{}

\maketitle\noindent{\hrulefill}

{\bf Abstract:} By constructing a series of perturbation functions through localization in the Fourier domain and using a symmetric form of the system, we show that the data-to-solution map for the Euler-Poincaré equations is nowhere uniformly continuous in $B^s_{p,r}(\R^d)$ with $s>\max\{1+\frac d2,\frac32\}$ and $(p,r)\in (1,\infty)\times [1,\infty)$. This improves our previous result which shows the data-to-solution map for the Euler-Poincaré equations is non-uniformly continuous on a bounded subset of $B^s_{p,r}(\R^d)$ near the origin.

{\bf Keywords:} Euler-Poincaré equations;  Nowhere uniformly continuous; Besov spaces; Data-to-solution map.

{\bf MSC (2010):} 35Q35, 35Q51, 35L30
\vskip0mm\noindent{\hrulefill}

\section{Introduction}\label{sec1}
In this paper, we consider the Cauchy problem in $\R^d$ for Euler-Poincaré equations
\begin{equation}\label{E-P}
\begin{cases}
\partial_tm+u\cdot \nabla m+\nabla u^T\cd m+(\mathrm{div} u)m=0, \qquad &(t,x)\in \R^+\times \R^d,\\
m=(1-\De)u,\qquad &(t,x)\in \R^+\times \R^d,\\
u(0,x)=u_0,\qquad &x\in \R^d.
\end{cases}
\end{equation}
The equations \eqref{E-P} were first introduced by Holm, Marsden, and Ratiu in \cite{hmr1,hmr2} as a high dimensional generalization of the following Camassa-Holm equation for modeling and analyzing the nonlinear shallow water waves :
\begin{align}\label{che}\tag{CH}
  m_{t}+um_{x}+2u_{x}m=0, \ m=u-u_{xx}.
  \end{align}
Indeed, when $d=1$ the Euler-Poincar\'{e} equations are the same as the Camassa-Holm equation (\ref{che}). Also, the Euler-Poincaré equations were investigated as the system describe geodesic motion on the diffeomorphism group with respect to the kinetic energy norm in \cite{hs}. 

 For $d=1$, the equation (\ref{che}) was introduced by Camassa and Holm\cite{ch} as a bi-Hamiltonian model for shallow water waves. Most importantly, CH equation has peakon solutions of the form $Ce^{-|x-Ct|}$ which aroused a lot of interest in physics, see \cite{c5,t}. There is an extensive literature about the strong well-posedness, weak solutions and analytic or geometric properties of the CH equation, here we name some. Local well-posedness and ill-posedness for the Cauchy problem of the CH equation were investigated in \cite{ce2,d2,glmy}. Blow-up phenomena and global existence of strong solutions were discussed in \cite{c2,ce2,ce3,ce4}. The existence of global weak solutions and dissipative solutions were investigated in \cite{bc1,bc2,xz1}, more results can be found in the references therein.

The first rigorous analysis of the Euler-Poincaré equations \eqref{E-P}  was done by Chae and Liu \cite{cli}, they eatablished the local existence of weak solution in $W^{2,p}(\R^d),\ p>d$ and local existence of unique classical solutions in $H^{s}(\R^d),\ s>\frac d 2+3$. Yan and Yin \cite{yy} further discussed the local existence and uniqueness of the solution to \eqref{E-P} in Besov spaces. On the other hand, Li, Yu and Zhai \cite{lyzz} proved that the solutions to \eqref{E-P} with a large class of smooth initial data blows up in finite time or exists globally in time, which settled an open problem raised by Chae and Liu \cite{cli}. Later, Luo and Yin have obtained a new blow-up result in the periodic case by using the rotational invariant properties of the equation\cite{luoy}. For more results of Euler-Poincar\'{e} equations, see \cite{luoy,zyl}.

Recently, starting from the research of Himonas et al. \cite{hk,hkm}, the continuity properties of the data-to-solution maps of the Camassa-Holm type equations are gradually attracting interest of many authors, see \cite{lyz1,lwyz}. Most of the non-uniform constinuity results are established only on a bounded set near the origin. To overcome this limitation, Inci obtained a series of nowhere uniform continuity results including many Camassa-Holm type equations \cite{inc1,inc2}. And for the incompressible Euler equation, Bourgain and Li \cite{bl} showed that the data-to-solution map is nowhere-uniform continuity in $H^s(\R^d)$ with $s\geq 0$ by using an idea of localized Galilean boost, this method will inspire us in this article.

As part of the well-posedness theory, the continuity properties of the data-to-solution map is indeed very important. In fact, the non-uniform continuity of data-to-solution map suggests that the local well-posedness cannot be established by the contraction mappings principle since this would imply Lipschitz continuity for the solution map. On the other hand, in some critical spaces the continuity of the data-to-solution maps are first broken before the existence and uniqueness of the solution, which leads to ill-posedness \cite{lyz2}.   

Most previous work on constinuity has focused on the spacial one-dimensional Camassa-Holm type equations equations, for the multi-dimensional Euler-Poincar\'{e} equations \eqref{E-P}, the continuity problem has not been thoroughly investigated. Until recently, Li et al. \cite{ldz} shown that the corresponding solution to \eqref{E-P} is not uniformly constinuous dependence for that the initial data in $H^{s}(\R^d),s>1+\frac d 2$. Later, the non-uniformly constinuous result was extended to Besov space $B^s_{p,r}(\R^d),s>\max\{1+\frac d2,\frac32\}$ in \cite{ldl}. 

It is worth to mention that, the non-uniform constinuity results of (\ref{E-P}) are established only on a bounded set near the origin. In this paper, we will remove the boundedness restriction and prove that the data-to-solution map of the Euler-Poincar\'{e} equations \eqref{E-P} is not uniformly continuous on any open subset $U\subset B^s_{p,r}(\R^d),s>\max\{1+\frac d2,\frac32\}$. Technically, our proof based on a symmetric form of the equation (\ref{E-P}), and a translation method to construct perturbation data, this method was introduced by Bourgain and Li \cite{bl} to proof the nowhere uniform constinuity of the incompressible Euler equations.

For simplicity, we first transform Eq.\eqref{E-P} into a transport type system. According to Yan\cite{yy}, we can rewrite \eqref{E-P} to the following nonlocal form:
\begin{align}\label{lns}
\partial_tu+u\cdot \nabla u=Q(u,u)+R(u,u),
\end{align}
where
\begin{equation}\label{qr}
  \begin{cases}
&Q(u,v)=-(I-\De)^{-1}\mathrm{div}\Big(\nabla u\nabla v+\nabla u\nabla v^T-\nabla u^T\nabla v-\nabla u(\mathrm{div} v)+\frac12(\nabla u : \nabla v)\mathbf{I}\Big),
\\&R(u,v)=-(I-\De)^{-1} \big( u~\div v+\nabla u^T v\big).
\end{cases}
\end{equation}
 We now define a symmetric bilinear operator $\T$ by
\begin{align}
  \T(u,v)&:=\frac12\Big( Q(u,v)+Q(v,u)+R(u,v)+R(v,u)\Big)\nonumber
  \\ &=-(I-\De)^{-1}\mathrm{div} \big(M(\nabla u,\nabla v)\big)-(I-\De)^{-1}\big(N(u,\nabla u; v,\nabla v)\big),\label{sbo}
  \end{align}
here $M, N$ are bilinear functions of $(\nabla u,\nabla v)$ and $(u,\nabla u; v, \nabla v)$ respectively according to (\ref{qr}), they are symmetric on $u, v$. Then, the Euler-Poincaré equations becomes
    \begin{equation}\label{eps}\tag{E-P}
      \begin{cases}
        \partial_tu+u\cdot \nabla u= \T(u,u), \qquad &(t,x)\in \R^+\times \R^d,\\
      u(0,x)=u_0,\qquad &x\in \R^d.
      \end{cases}
      \end{equation}
We first recall the non-uniform continuity results established in \cite{ldl}.
\begin{theorem}[\bf{Non-uniform continuity on a bounded set}]\label{T1} Let $d\geq 2$ and $s>2+\max\big\{1+\frac d p,\frac 3 2\big\}$ with $1\leq p,r\leq \infty$. The data-to-solution map $S_t$ for Euler-Poincaré equations (\ref{eps}) is not uniformly continuous from any bounded subset $O_N=\{u_0\in B^s_{p,r}(\R^d):\|u_0\|_{B^s_{p,r}}\leq N\}$ into $\mathcal{C}([0,T];B^s_{p,r})$. More precisely, there exists two sequences of initial data $f_n + g_n ,~f_n$ such that
\begin{align*}
  \|f_n\|_{B^s_{p,r}}\lesssim 1\quad and \quad \lim_{n\to\infty}\|g_n\|_{B^s_{p,r}}=0,
\end{align*}
with the solutions $S_t(f_n + g_n),~S_t(f_n)$ satisfy 
\begin{align*}
 \liminf_{n\to\infty}\|S_t(f_n + g_n)-S_t(f_n)\|_{B^s_{p,r}}\geq c_0t,~\forall t\in[0,T_0],
\end{align*}
for some constant $c_0>0$ and small time $T_0$.
\end{theorem}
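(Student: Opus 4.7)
My strategy follows the classical Himonas--Kenig idea in its multidimensional form (cf.\ Bourgain--Li for Euler): construct $f_n+g_n$ so that a slowly varying low-frequency piece of one sequence transports the rapidly oscillating high-frequency piece of the other, producing an $O(t)$ shift of a unit-amplitude wave at frequency $n$ and hence a fixed amount of $B^s_{p,r}$ norm. Fix $\phi\in C_c^\infty(\R^d;\R)$ with $\|\phi\|_{L^p}\neq 0$ and let $\vec e_1$ denote the first coordinate direction. For $n$ large, set
\[
f_n(x) := n^{-s}\phi(x)\sin(nx_1)\vec e_1,\qquad g_n(x) := n^{-1}\phi(x)\vec e_1.
\]
Since $\phi(x)\sin(nx_1)$ has spectrum concentrated near $\{\xi=\pm n\vec e_1\}$, a Littlewood--Paley computation yields $\|f_n\|_{B^s_{p,r}}\simeq\|\phi\|_{L^p}\lesssim 1$ and $\|g_n\|_{B^s_{p,r}}\simeq n^{-1}\to 0$. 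The Yan--Yin local well-posedness theorem then produces $u_n:=S_t(f_n+g_n)$ and $v_n:=S_t(f_n)$ on a common interval $[0,T_0]$ with uniform $B^s_{p,r}$ bound.

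\textbf{Taylor expansion.} Applying Taylor's theorem in time and using \eqref{eps} at $t=0$ yields
\[
u_n(t)-v_n(t)=g_n+tE_n+r_n(t),
\]
where $E_n=-f_n\cd\nabla g_n-g_n\cd\nabla f_n-g_n\cd\nabla g_n+2\T(f_n,g_n)+\T(g_n,g_n)$ and $\|r_n(t)\|_{B^s_{p,r}}\leq Ct^2$ uniformly in $n$. Bony-paraproduct estimates in Besov spaces, combined with the two-derivative smoothing of $(I-\De)^{-1}$ inside $\T$, show that every term in $E_n$ other than $-g_n\cd\nabla f_n$ is $O(n^{-1})$ in $B^s_{p,r}$. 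A direct calculation using $\pa_{x_1}[\phi\sin(nx_1)]=n\phi\cos(nx_1)+O(1)$ gives
\[
-g_n\cd\nabla f_n=-n^{-s}\phi(x)^2\cos(nx_1)\vec e_1+O(n^{-1})\ \text{in}\ B^s_{p,r},
\]
a function of $B^s_{p,r}$-norm $\simeq\|\phi^2\|_{L^p}=:2c_0>0$, spectrally localised at frequency $\simeq n$.

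\textbf{Frequency separation.} The decisive observation is that $g_n$ lives in the low-frequency regime while $-tg_n\cd\nabla f_n$ lives in the single dyadic block $\Delta_{j_n}$ with $2^{j_n}\simeq n$. Restricting the $B^s_{p,r}$ norm to that block therefore gives
\[
\|u_n(t)-v_n(t)\|_{B^s_{p,r}}\geq 2^{j_n s}\|\Delta_{j_n}[u_n(t)-v_n(t)]\|_{L^p}\geq 2c_0 t-Ct^2-Cn^{-1}t.
\]
Choosing $T_0$ so that $CT_0\leq c_0/2$ and sending $n\to\infty$ yields the claim $\liminf_{n\to\infty}\|u_n(t)-v_n(t)\|_{B^s_{p,r}}\geq c_0 t$ on $[0,T_0]$.

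\textbf{Main obstacle.} The serious technical point is the uniform $O(t^2)$ control of the remainder $r_n(t)$, since it rests on an $L^\infty_t B^{s-2}_{p,r}$ bound for $\partial_t^2 u_n$ and thus on delicate paraproduct estimates for both the quasilinear transport nonlinearity $u\cd\nabla u$ and the nonlocal operator $\T$. The symmetric form \eqref{eps}, together with the two-derivative smoothing hidden in $\T$, is precisely what prevents loss of derivatives in these product estimates, and is the reason the bilinear rewriting is used at all---this also explains the slightly stronger regularity $s>2+\max\{1+d/p,3/2\}$ in the hypothesis. A secondary bookkeeping task is verifying that the four subdominant terms in $E_n$ really do vanish in $B^s_{p,r}$ (not merely in a weaker norm) as $n\to\infty$; this relies on $\nabla g_n$ being purely low-frequency with $\|\nabla g_n\|_{L^\infty}=O(n^{-1})$.
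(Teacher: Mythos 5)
Note first that Theorem~\ref{T1} is not proved in this paper: it is quoted verbatim from \cite{ldl}, which also introduces essentially the data you use, except that in the paper the bump $\phi$ is taken with $\widehat{\phi}\in C_0^\infty$ so that $f_n$ sits in a \emph{single} dyadic block exactly (rather than a physical-space bump, whose Fourier transform is Schwartz but not compactly supported), and the carrier frequency is $\frac{17}{12}2^n$ inside the annulus where $\varphi\equiv 1$. These are cosmetic differences; your pair $(f_n,g_n)$, the identification of the dominant term $-g_n\cdot\nabla f_n$, and the frequency-localization step mirror the reference.

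The genuine gap is in the line ``$\|r_n(t)\|_{B^s_{p,r}}\leq Ct^2$ uniformly in $n$.'' You correctly flag this as the serious point, but the argument you sketch does not deliver it. A uniform $L^\infty_t B^{s-2}_{p,r}$ bound on $\partial_t^2 u_n$ only gives $\|r_n(t)\|_{B^{s-2}_{p,r}}\leq Ct^2$; it does not promote this to $B^s_{p,r}$. And the promotion is not automatic: since $\|f_n\|_{B^{s+k}_{p,r}}\simeq n^k$, quantities like $\|\nabla\partial_t v_n\|_{B^s_{p,r}}$ are $O(n)$, and a naive second-derivative bound of $\delta_n=u_n-v_n$ in $B^s_{p,r}$ involves such unbounded factors. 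To close the Taylor-remainder step one must either (a) do a frequency-localized estimate showing that the \emph{block} $2^{j_n s}\|\Delta_{j_n}\partial_t^2\delta_n\|_{L^p}$ is $O(1)$ --- which requires tracking which paraproduct components of $u_n\cdot\nabla\partial_t\delta_n$ and $\delta_n\cdot\nabla\partial_t v_n$ can land at frequency $2^{j_n}$ and verifying they all have amplitude $\lesssim 2^{-j_n s}$ --- or (b) obtain the remainder bound $O(t^2)$ in $B^{s-2}_{p,r}$, obtain an $O(n)$ bound in $B^{s+1}_{p,r}$ from Lemma~\ref{hlocal} and the explicit data, and interpolate via Lemma~\ref{inte}. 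Route (b) is precisely why the hypothesis is $s>2+\max\{1+d/p,3/2\}$ rather than the well-posedness threshold $s>\max\{1+d/p,3/2\}$: the proof in \cite{ldl} spends two derivatives on this interpolation. Without one of these two steps spelled out, the inequality $\|u_n(t)-v_n(t)\|_{B^s_{p,r}}\geq 2c_0t-Ct^2-Cn^{-1}t$ is not established, and the proof is incomplete at exactly the point you identify as the main obstacle.

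Two minor points worth recording: (i) with a physical-space bump, $\Delta_{j_n}g_n$ is not literally zero but merely $O(2^{-Kj_n})$ for every $K$; harmless, but using $\widehat{\phi}\in C_0^\infty$ as in \eqref{fg} removes the issue altogether. (ii) The constant is $2^{j_ns}\|\Delta_{j_n}(\phi^2\cos(nx_1))\|_{L^p}\to c_p\|\phi^2\|_{L^p}$ with a $p$-dependent (averaged-cosine) factor $c_p$, not $\|\phi^2\|_{L^p}$ exactly; this does not affect the conclusion.
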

The main result of this paper is the following theorem.
\begin{theorem}[\bf{Nowhere uniform continuity}]\label{T}
  Assume that $d\geq 2$, and  
  \begin{align}\label{spr}
    s>2+\max\big\{1+\frac d p,\frac 3 2\big\}\quad and\quad (p,r)\in (1,\infty)\times [1,\infty). 
  \end{align}
Then the data-to-solution map $S_t$ for Euler-Poincaré equations for the Cauchy problem (\ref{eps})
\begin{align*}
  S_t: B^s_{p,r}(\R^d)\to \mathcal{C}([0,T];B^s_{p,r}),\quad u_0\mapsto S_t(u_0),
\end{align*}
is nowhere uniformly continuous from $B^s_{p,r}$ into $\mathcal{C}([0,T];B^s_{p,r})$. More precisely, for any $u_0\in B^s_{p,r}$ and $N>0$, there exists two sequences of functions $f_n(x), g_n(x)$ such that
  \begin{align*}
    \|f_n\|_{B^s_{p,r}}\lesssim 2^{-N}\quad and \quad \lim_{n\to\infty}\|g_n\|_{B^s_{p,r}}=0,
  \end{align*}
  the corresponding solutions $S_t(f_n + g_n),~S_t(f_n)$ satisfy 
  \begin{align*}
   \liminf_{n\to\infty}\|S_t(u_0+f_n + g_n)-S_t(u_0+f_n)\|_{B^s_{p,r}}\geq c_0t,~\forall t\in[0,T_0],
  \end{align*}
  for some constant $c_0>0$ and small time $T_0$.
  \end{theorem}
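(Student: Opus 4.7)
The plan is to construct, for each $u_0\in B^s_{p,r}(\R^d)$ and each $N>0$, sequences $f_n,g_n$ of high-frequency wave packets with $\|f_n\|_{B^s_{p,r}}\les 2^{-N}$ and $\|g_n\|_{B^s_{p,r}}\to 0$, but arranged so that $\|\na g_n\|_{B^s_{p,r}}\sim 1$. This is the analogue for~\eqref{eps} of the localized Galilean-boost device of Bourgain-Li~\cite{bl}. The transport term $(u_0+f_n)\cd\na g_n$ then contributes an $O(t)$ leading piece to the difference $w_n:=S_t(u_0+f_n+g_n)-S_t(u_0+f_n)$ through Duhamel's formula, while every other contribution is either $O(t\|g_n\|_{B^s_{p,r}})$ or $O(t^2)$. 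The symmetric form~\eqref{eps} and its associated $B^s_{p,r}$ a priori estimate (as in~\cite{yy,ldl}) are what let the scheme close.

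\textbf{Construction and Duhamel expansion.} Fix a unit vector $e_1\in\R^d$, a base point $x_0\in\R^d$, and smooth bumps $\phi,\eta$ with $\hat\phi$ compactly supported (so that $\phi(x)\sin(2^ne_1\cd x)$ is spectrally concentrated near $|\xi|\sim 2^n$). Choose $\eta$ and $x_0$ so that $((u_0+2^{-N}\eta)\cd e_1)(x_0)\neq 0$, which is always possible for any $u_0$. Set
\begin{align*}
f_n(x)=2^{-N}\eta(x-x_0),\qquad g_n(x)=2^{-n(s+1)}\phi(x-x_0)\sin(2^ne_1\cd x),
\end{align*}
so that $\|f_n\|_{B^s_{p,r}}\les 2^{-N}$ and $\|g_n\|_{B^s_{p,r}}\les 2^{-n}$. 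Local well-posedness~\cite{yy} provides a common existence time $T_0>0$ on which $u^n:=S_t(u_0+f_n)$ and $\bar u^n:=S_t(u_0+f_n+g_n)$ are uniformly bounded in $B^s_{p,r}$. Subtracting the two equations, $w_n$ solves a linear transport-type equation with source controlled by $\|g_n\|_{B^s_{p,r}}$; Duhamel's formula and a short $t$-Taylor expansion give
\begin{align*}
w_n(t)=g_n - t\,(u_0+f_n)\cd\na g_n + \mathcal{R}_n(t),
\end{align*}
where $\mathcal{R}_n(t)$ collects $-\int_0^t(u^n-(u_0+f_n))\cd\na w_n\,ds$, the $\T$-contributions, the $w_n\cd\na u^n$ term, and all higher-order in $t$ corrections.

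\textbf{Main estimates.} Two inequalities close the argument. The first is a remainder bound $\|\mathcal{R}_n(t)\|_{B^s_{p,r}}\les t\|g_n\|_{B^s_{p,r}}+t^2$, using Besov product estimates (algebra property in the range~\eqref{spr}) and the $+1$-smoothing of $(I-\De)^{-1}\div$ in $\T$: every term in $\mathcal{R}_n$ either carries an extra $\|g_n\|_{B^s_{p,r}}\to 0$ factor (e.g.\ $g_n\cd\na(u_0+f_n)$, $\T(u_0+f_n,g_n)$) or an extra power of $t$ from nested Duhamel iterations. The second is a lower bound $\|(u_0+f_n)\cd\na g_n\|_{B^s_{p,r}}\ge 2c_0>0$ for $c_0=c_0(u_0,N,\phi,\eta)$: to leading order in $n$, $\na g_n\approx 2^{-ns}e_1\,\phi(x-x_0)\cos(2^ne_1\cd x)$, so the product is frequency-localized near $|\xi|\sim 2^n$ and its $B^s_{p,r}$-norm is, via Bernstein's inequality, comparable to $\|((u_0+2^{-N}\eta)\cd e_1)\phi(\cd-x_0)\|_{L^p}$, a strictly positive constant by the choice of $x_0$. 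Combining gives $\|w_n(t)\|_{B^s_{p,r}}\ge 2c_0 t-(1+Ct)\|g_n\|_{B^s_{p,r}}-Ct^2\ge c_0 t$ for $t\in[0,T_0]$ small and $n$ sufficiently large, which yields the claim after taking $\liminf_{n\to\infty}$.

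\textbf{Main obstacle.} The hardest technical point is controlling $\mathcal{R}_n(t)$: because the declared leading term forces $\|w_n\|_{B^s_{p,r}}\sim t$, a careless estimate of $\int_0^t u^n\cd\na w_n\,ds$ could yield an uncancelled $O(t)$ piece masking the leading contribution. Resolving this requires a bootstrap establishing $\|w_n(t)\|_{B^s_{p,r}}\les t+\|g_n\|_{B^s_{p,r}}$ on a uniform time interval, followed by a careful decomposition of each term in $\mathcal{R}_n$ to verify that after subtracting $-t(u_0+f_n)\cd\na g_n$ the remainder is genuinely $o(t)$ as $n\to\infty$. The symmetric form~\eqref{eps} together with the commutator/paraproduct estimates of~\cite{ldl} are the main tools here.
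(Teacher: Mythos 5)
Your proposal takes a fundamentally different route from the paper: you attempt a direct Duhamel/Taylor expansion of $w_n=S_t(u_0+f_n+g_n)-S_t(u_0+f_n)$ around $t=0$ with the transport term $(u_0+f_n)\cdot\na g_n$ as the declared leading piece, essentially re-running the argument behind Theorem~\ref{T1} but with a generic $u_0$ in the background. The paper instead proves the theorem by \emph{reduction}: it writes the difference as the known bounded-set difference $S_t(f_n^m+g_n^m)-S_t(f_n^m)$ plus two error terms (identity~(\ref{app})), and kills the errors using (i) a frequency truncation $u_0\mapsto S_nu_0$ so that the background data becomes effectively $B^{s+1}_{p,r}$ with controlled norm $\lesssim 2^n$, and (ii) a spatial translation $f_n\mapsto f_n^m$, $g_n\mapsto g_n^m$ with $m\to\infty$, so that the perturbation decouples from $u_0$ and the solution map becomes approximately additive (Proposition~\ref{df2}, via dominated convergence in $L^p$). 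Neither of these two devices appears in your proposal, and both are essential.

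The concrete gap is the roughness of $u_0$. Your remainder $\mathcal{R}_n(t)$ contains, already at order $t$, the term $g_n\cdot\na(u_0+f_n)$ coming from the source $-w_n\cdot\na u^n$ in the equation for $w_n$ (cf.~(\ref{del})). To bound $w_n$ in $B^s_{p,r}$ via Lemma~\ref{tee} you must put this source into $B^s_{p,r}$. But $\na u_0\in B^{s-1}_{p,r}$ only; the tame estimate of Lemma~\ref{pe} requires $\na u_0\in B^s_{p,r}\cap L^\infty$, and the paraproduct $T_{g_n}\na u_0$ satisfies merely
\begin{align*}
\|T_{g_n}\na u_0\|_{B^s_{p,r}}^r \lesssim \|g_n\|_{L^\infty}^r\sum_{j\gtrsim n}2^{j(s+1)r}\|\De_j u_0\|_{L^p}^r,
\end{align*}
which for a generic $u_0\in B^s_{p,r}\setminus B^{s+1}_{p,r}$ is $+\infty$. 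So the claimed bound $\|\mathcal{R}_n(t)\|_{B^s_{p,r}}\les t\|g_n\|_{B^s_{p,r}}+t^2$ cannot be obtained — the quantity on the left need not even be finite. This is precisely why the paper replaces $u_0$ by $S_nu_0$ before doing any estimate that involves one extra derivative on the background (see the use of $\|S_nu_0+f_n^m\|_{B^{s+1}_{p,r}}\les 2^n$ in~(\ref{dif1})–(\ref{dif3})), and then separately controls the truncation error $\|(I-S_n)u_0\|_{B^s_{p,r}}\to 0$ via Proposition~\ref{df1}. Your "bootstrap" plan for $\mathcal{R}_n$ addresses the circularity in the Duhamel iteration but not this more basic obstruction, which occurs already at the first iterate. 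A secondary issue: even after fixing the regularity problem, the interaction $u_0\cdot\na g_n$ and the $\T(u_0,g_n)$ terms would have to be shown not to cancel or overwhelm the declared leading term uniformly over all admissible $u_0$ and all large $n$; the paper avoids this entirely by translating $f_n^m,g_n^m$ to spatial infinity so that these cross terms vanish in the $m\to\infty$ limit (Lebesgue dominated convergence in~(\ref{diff2})), and then it simply invokes the already-established lower bound~(\ref{sfg}) from Theorem~\ref{T1}.
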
     
\begin{remark}
  As a comparison with Theorem \ref{T1}, Theorem \ref{T} avoids endpoints $p=1$ and $p=\infty$, this is because we need to use the boundedness of Riez transform in $L^p(\R^d)$ when doing gradient estimate of $\T$ (see Lemma \ref{3t} blow), which is only available when $p\in(1,\infty)$. 
\end{remark}
\begin{remark}
 The non-uniform constinuity in Theorem \ref{T1} established only on a bounded set near the origin, in Theorem \ref{T} we have removed these restrictions and showed that for any $u_0$ and any neighbour $U(u_0)\subset B^s_{p,r}$, the data-to-solution map restrict on $U$ is not uniformly continuous. In this sense, Theorem \ref{T} improves the previous results in \cite{ldl}. 
\end{remark}

The remainder of this paper is organized as follows. In Section \ref{sec2}, we list some notations and recall basic results of the Littlewood-Paley theory. In Section \ref{sec3}, we present the proof of Theorem \ref{T} by establishing some technical lemmas and propositions.

\section{Littlewood-Paley analysis}\label{sec2}
We first present some facts about the Littlewood-Paley decomposition, the nonhomogeneous Besov spaces and their some useful properties (see \cite{bcd} for more details).

Let $\mathcal{B}:=\{\xi\in\R^d:|\xi|\leq 4/3\}$ and $\mathcal{C}:=\{\xi\in\R^d:3/4\leq|\xi|\leq 8/3\}.$
Choose a radial, non-negative, smooth function $\chi:\R^d\mapsto [0,1]$ such that it is supported in $\mathcal{B}$ and $\chi\equiv1$ for $|\xi|\leq3/4$. Setting $\varphi(\xi):=\chi(\xi/2)-\chi(\xi)$, then we deduce that $\varphi$ is supported in $\mathcal{C}$. Moreover,
\begin{eqnarray*}
\chi(\xi)+\sum_{j\geq0}\varphi(2^{-j}\xi)=1 \quad \mbox{ for any } \xi\in \R^d.
\end{eqnarray*}
We should emphasize that the fact $\varphi(\xi)\equiv 1$ for $4/3\leq |\xi|\leq 3/2$ will be used in the sequel.

For every $u\in \mathcal{S'}(\mathbb{R}^d)$, the inhomogeneous dyadic blocks ${\Delta}_j$ are defined as follows
\begin{equation*}
{\Delta_ju=}
\begin{cases}
0,   &if \quad j\leq-2;\\
\chi(D)u=\mathcal{F}^{-1}(\chi \mathcal{F}u),  &if \quad j=-1;\\
\varphi(2^{-j}D)u=\mathcal{F}^{-1}\g(\varphi(2^{-j}\cdot)\mathcal{F}u\g), &if  \quad j\geq0.
\end{cases}
\end{equation*}
In the inhomogeneous case, the following Littlewood-Paley decomposition makes sense
$$
u=\sum_{j\geq-1}{\Delta}_ju\quad \text{for any}\;u\in \mathcal{S'}(\mathbb{R}^d).
$$
\begin{definition}label{besov}
Let $s\in\mathbb{R}$ and $(p,r)\in[1, \infty]^2$. The nonhomogeneous Besov space $B^{s}_{p,r}(\R^d)$ is defined by
\begin{align*}
B^{s}_{p,r}(\R^d):=\Big\{f\in \mathcal{S}'(\R^d):\;\|f\|_{B^{s}_{p,r}(\mathbb{R}^d)}<\infty\Big\},
\end{align*}
where
\begin{numcases}{\|f\|_{B^{s}_{p,r}(\mathbb{R}^d)}=}
\left(\sum_{j\geq-1}2^{sjr}\|\Delta_jf\|^r_{L^p(\mathbb{R}^d)}\right)^{\fr1r}, &if $1\leq r<\infty$,\nonumber\\
\sup_{j\geq-1}2^{sj}\|\Delta_jf\|_{L^p(\mathbb{R}^d)}, &if $r=\infty$.\nonumber
\end{numcases}
\end{definition}
The following Bernstein's inequalities will be used in the sequel.
\begin{lemma} Let $\mathcal{B}$ be a Ball and $\mathcal{C}$ be an annulus. There exist constants $C>0$ such that for all $k\in \mathbb{N}\cup \{0\}$, any positive real number $\lambda$ and any function $f\in L^p(\R^d)$ with $1\leq p \leq q \leq \infty$, we have
\begin{align*}
&{\rm{supp}}\hat{f}\subset \lambda \mathcal{B}\;\Rightarrow\; \|D^kf\|_{L^q}:=\sup_{|\alpha|=k}\|\partial^\alpha f\|_{L^q}\leq C^{k+1}\lambda^{k+(\frac{d}{p}-\frac{d}{q})}\|f\|_{L^p},  \\
&{\rm{supp}}\hat{f}\subset \lambda \mathcal{C}\;\Rightarrow\; C^{-k-1}\lambda^k\|f\|_{L^p} \leq \|\D^kf\|_{L^p} \leq C^{k+1}\lambda^k\|f\|_{L^p}.
\end{align*}
\end{lemma}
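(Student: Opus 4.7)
The strategy is to reduce every estimate to Young's convolution inequality applied to a smooth Fourier multiplier adapted to the frequency localization. Fix a radial $\chi\in C_c^\infty(\R^d)$ with $\chi\equiv 1$ on the ball $\mathcal{B}$ and $\mathrm{supp}\,\chi$ contained in a slightly larger ball, and set $h:=\mathcal{F}^{-1}\chi\in\mathcal{S}(\R^d)$. Since $\mathrm{supp}\,\hat{f}\subset\lambda\mathcal{B}$, one has $\hat{f}(\xi)=\chi(\xi/\lambda)\hat{f}(\xi)$, hence $f=h_\lambda *f$ with $h_\lambda(x)=\lambda^d h(\lambda x)$. Differentiating under the convolution and applying Young's inequality with $\tfrac{1}{r}=1+\tfrac{1}{q}-\tfrac{1}{p}$ yields
\begin{equation*}
\|\partial^\alpha f\|_{L^q}\le\|\partial^\alpha h_\lambda\|_{L^r}\|f\|_{L^p}=\lambda^{|\alpha|+\frac{d}{p}-\frac{d}{q}}\|\partial^\alpha h\|_{L^r}\|f\|_{L^p},
\end{equation*}
which already contains the correct powers of $\lambda$.

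To pin down the $C^{k+1}$ growth of the constant, I would split the first inequality into two independent pieces: (i) the pure exponent-change inequality $\|f\|_{L^q}\le C\lambda^{d/p-d/q}\|f\|_{L^p}$ (the $k=0$ case), proved by Young with $h_\lambda$ in place of $\partial^\alpha h_\lambda$; and (ii) a same-exponent derivative estimate $\|\partial^\alpha f\|_{L^p}\le (C\lambda)^{|\alpha|}\|f\|_{L^p}$, proved first for $|\alpha|=1$ by the same multiplier trick and then iterated, using that $\partial_i f$ inherits the Fourier support in $\lambda\mathcal{B}$. Composing (i) and (ii) and taking the supremum over $|\alpha|=k$ produces the first inequality with a constant of the form $C^{k+1}$.

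For the annulus case the upper bound $\|D^k f\|_{L^p}\le C^{k+1}\lambda^k\|f\|_{L^p}$ follows at once from step (ii), since $\lambda\mathcal{C}$ sits inside a larger ball. The lower bound requires inverting $\nabla$ on the annulus, so I would pick $\widetilde\varphi\in C_c^\infty(\R^d)$ with $\widetilde\varphi\equiv 1$ on $\mathcal{C}$ and $\mathrm{supp}\,\widetilde\varphi$ inside an annulus bounded away from the origin. Using the elementary identity $\sum_{i=1}^d\xi_i\cdot\frac{\xi_i}{|\xi|^2}=1$ on $\mathrm{supp}\,\widetilde\varphi$, I would write, for any $f$ with Fourier support in $\lambda\mathcal{C}$,
\begin{equation*}
\hat{f}(\xi)=\widetilde\varphi(\xi/\lambda)\hat{f}(\xi)=\frac{1}{i\lambda}\sum_{i=1}^d\psi_i(\xi/\lambda)\,\widehat{\partial_i f}(\xi),\qquad \psi_i(\xi):=\frac{\xi_i}{i\,|\xi|^2}\widetilde\varphi(\xi).
\end{equation*}
Each $\psi_i$ lies in $C_c^\infty(\R^d)$, so $\mathcal{F}^{-1}\psi_i\in L^1(\R^d)$ and Young's inequality gives $\lambda\|f\|_{L^p}\le C\|\nabla f\|_{L^p}$. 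Iterating $k$ times, again using that every $\partial_i f$ remains Fourier-localized in $\lambda\mathcal{C}$, yields the desired $\lambda^k\|f\|_{L^p}\le C^{k+1}\|D^k f\|_{L^p}$.

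The main obstacle is maintaining the geometric $C^{k+1}$ growth of constants under iteration. The key observation is that the one-step multipliers $\chi$, $\widetilde\varphi$, and the associated $\psi_i$ are chosen once and for all, independently of $k$, so each additional derivative contributes only a uniform factor $C\lambda$ rather than an $\alpha$-dependent constant such as $\|\partial^\alpha h\|_{L^r}$, which could otherwise blow up with $|\alpha|$.
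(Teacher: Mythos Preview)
The paper does not supply its own proof of this lemma; it is stated as a standard background result (the neighboring lemmas are all tagged ``See \cite{bcd}'', and this is the classical Bernstein lemma from that reference, stated without proof). Your proposal is correct and is exactly the textbook argument one finds in Bahouri--Chemin--Danchin: write $f=h_\lambda*f$ via a compactly supported multiplier, apply Young's inequality for the $L^p\to L^q$ step, and obtain the $C^{k+1}$ growth by iterating the one-derivative estimate rather than estimating $\|\partial^\alpha h\|_{L^r}$ directly; for the annulus lower bound, invert a single gradient via the smooth multipliers $\xi_i|\xi|^{-2}\widetilde\varphi(\xi)$ and iterate. There is nothing further to compare.
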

\begin{lemma}[See \cite{bcd}]\label{inte}
Let $(s_1,s_2,p,r)\in \R^2\times [1,\infty]^2$, and $s_1<s_2,~0<\theta<1$, then we have
  \begin{align*}
    \|u\|_{B^{\theta s_1+(1-\theta)s_2}_{p,r}}\leq &\|u\|_{B^{s_1}_{p,r}}^\theta\|u\|_{B^{s_2}_{p,r}}^{1-\theta},\\
    \|u\|_{B^{\theta s_1+(1-\theta)s_2}_{p,1}}\leq &\fr{C}{s_2-s_1}\Bigl(\frac{1}{\theta}+\frac{1}{1-\theta}\Bigr)\|u\|_{B^{s_1}_{p,\infty}}^\theta\|u\|_{B^{s_2}_{p,\infty}}^{1-\theta}.
      \end{align*}
\end{lemma}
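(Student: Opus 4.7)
The plan is to derive both bounds directly from the Littlewood--Paley characterization of the Besov norms: the first by a factor-wise H\"older inequality on the frequency blocks, the second by splitting the dyadic sum at a threshold that is then optimized. The second inequality is the real content of the lemma; the first is a warm-up.

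For the first inequality, observe the pointwise-in-$j$ factorization
\[2^{(\theta s_1+(1-\theta)s_2)j}\|\Delta_j u\|_{L^p}=\bigl(2^{s_1 j}\|\Delta_j u\|_{L^p}\bigr)^{\theta}\bigl(2^{s_2 j}\|\Delta_j u\|_{L^p}\bigr)^{1-\theta}.\]
For $1\leq r<\infty$, raising to the $r$-th power, summing in $j$, and applying H\"older's inequality in $\ell^r$ with conjugate exponents $1/\theta$ and $1/(1-\theta)$ decouples the two factors and produces exactly $\|u\|_{B^{s_1}_{p,r}}^{\theta}\|u\|_{B^{s_2}_{p,r}}^{1-\theta}$. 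The case $r=\infty$ follows from the submultiplicativity of the supremum $\sup_j a_j^\theta b_j^{1-\theta}\leq (\sup_j a_j)^\theta(\sup_j b_j)^{1-\theta}$.

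For the second inequality, H\"older alone is insufficient because the target is an $\ell^1$ sum while the hypotheses give only $\ell^\infty$ control. Instead I would split at a free level $J\geq -1$, using the two pointwise-in-$j$ bounds $\|\Delta_j u\|_{L^p}\leq 2^{-s_i j}\|u\|_{B^{s_i}_{p,\infty}}$ for $i=1,2$: the $s_1$-bound on $j\leq J$ and the $s_2$-bound on $j>J$. Writing $\sigma=\theta s_1+(1-\theta)s_2$, each half becomes a geometric series in $j$ with rate $(1-\theta)(s_2-s_1)>0$, respectively $-\theta(s_2-s_1)<0$. The elementary estimate $\sum 2^{aj}\lesssim \tfrac{1}{a}\,2^{aJ}$ (summed in the appropriate direction) then yields contributions of order
\[\frac{1}{(1-\theta)(s_2-s_1)}\,2^{(1-\theta)(s_2-s_1)J}\|u\|_{B^{s_1}_{p,\infty}}\quad\text{and}\quad \frac{1}{\theta(s_2-s_1)}\,2^{-\theta(s_2-s_1)J}\|u\|_{B^{s_2}_{p,\infty}}.\]

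The final step is to optimize by choosing $J$ so that $2^{(s_2-s_1)J}\sim\|u\|_{B^{s_2}_{p,\infty}}/\|u\|_{B^{s_1}_{p,\infty}}$; both pieces then become proportional to $\|u\|_{B^{s_1}_{p,\infty}}^{\theta}\|u\|_{B^{s_2}_{p,\infty}}^{1-\theta}$, and adding them reproduces precisely the constant $\tfrac{C}{s_2-s_1}\bigl(\tfrac{1}{\theta}+\tfrac{1}{1-\theta}\bigr)$ in the statement. The main obstacle — the one place I would check carefully — is keeping the geometric-sum prefactors sharp in the regime where $(s_2-s_1)$ or one of $\theta,1-\theta$ is small, since that regime is exactly what dictates the stated form of the constant; the degenerate situations where $\|u\|_{B^{s_1}_{p,\infty}}=0$ or $\|u\|_{B^{s_2}_{p,\infty}}=\infty$ are handled by an obvious limiting argument in $J$ or are trivial.
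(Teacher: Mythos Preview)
The paper does not give its own proof of this lemma: it is stated with the attribution ``See \cite{bcd}'' and no argument is supplied. Your proposal is correct and is precisely the standard proof one finds in Bahouri--Chemin--Danchin (the H\"older-in-$\ell^r$ argument for the first inequality, and the split-and-optimize geometric-series argument for the second), so there is nothing to compare against and nothing to fix.
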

Then, we give some important product estimates which will be used throughout the paper.
\begin{lemma}[See \cite{bcd}]\label{pe}
  For $(p,r)\in[1, \infty]^2$ and $s>0$, $B^s_{p,r}(\R^d)\cap L^\infty(\R^d)$ is an algebra. Moreover, for any $u,v \in B^s_{p,r}(\R^d)\cap L^\infty(\R^d)$, we have
  \bbal
  &\|uv\|_{B^{s}_{p,r}}\leq C(\|u\|_{B^{s}_{p,r}}\|v\|_{L^\infty}+\|v\|_{B^{s}_{p,r}}\|u\|_{L^\infty}).
  \end{align*}
In addition, if $s>\max\big\{1+\frac d p,\frac32\big\}$, then
    \begin{align*}
    &\|uv\|_{B^{s-2}_{p,r}(\R^{d})}\leq C\|u\|_{B^{s-2}_{p,r}(\R^{d})}\|v\|_{B^{s-1}_{p,r}(\R^{d})}.
    \end{align*}
    \end{lemma}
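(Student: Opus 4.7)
The natural plan is to apply Bony's paraproduct decomposition
$$uv = T_u v + T_v u + R(u,v), \qquad T_u v = \sum_{j} S_{j-1}u \cdot \Delta_j v, \qquad R(u,v) = \sum_{|i-j|\leq 1} \Delta_i u \cdot \Delta_j v,$$
and estimate the three pieces separately. Each block $S_{j-1}u \cdot \Delta_j v$ is Fourier-localized in a dyadic annulus of size $\sim 2^j$, while each $\Delta_i u \cdot \Delta_j v$ with $|i-j|\leq 1$ is Fourier-localized in a ball of size $\sim 2^j$; this dictates which output block $\Delta_{j'}$ receives each contribution and sets up the convolution structure needed for the $\ell^r$ bound via Young's inequality.

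For the first (algebra) estimate with $s>0$, the paraproducts are classical: $\|S_{j-1}u\|_{L^\infty}\leq \|u\|_{L^\infty}$ combined with H\"older yields $\|T_u v\|_{B^s_{p,r}}\lesssim \|u\|_{L^\infty}\|v\|_{B^s_{p,r}}$, and symmetrically for $T_v u$. For the remainder, only frequencies with $j \geq j'-N$ contribute to $\Delta_{j'}R(u,v)$, giving
$$2^{j's}\|\Delta_{j'} R(u,v)\|_{L^p} \lesssim \|u\|_{L^\infty} \sum_{j \geq j'-N} 2^{(j'-j)s}\cdot 2^{js}\|\widetilde{\Delta}_j v\|_{L^p}.$$
The hypothesis $s>0$ makes $2^{(j'-j)s}\mathbf{1}_{j\geq j'-N}$ summable, so Young's inequality in $\ell^r$ yields $\|R(u,v)\|_{B^s_{p,r}}\lesssim \|u\|_{L^\infty}\|v\|_{B^s_{p,r}}$, and summing the three pieces completes the algebra estimate.

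For the second (tame) estimate with $s>\max\{1+d/p,\,3/2\}$, the embedding $B^{s-1}_{p,r}\hookrightarrow L^\infty$ (valid since $s-1>d/p$) gives $\|v\|_{L^\infty}\lesssim \|v\|_{B^{s-1}_{p,r}}$, hence $\|T_v u\|_{B^{s-2}_{p,r}}\lesssim \|v\|_{L^\infty}\|u\|_{B^{s-2}_{p,r}}$. For $T_u v$, since $u$ has only regularity $s-2$ (possibly negative), I would invoke the paraproduct estimate with a negative-index norm on the first argument, $\|T_u v\|_{B^{s-2}_{p,r}}\lesssim \|u\|_{B^{-1}_{\infty,\infty}}\|v\|_{B^{s-1}_{p,r}}$, and then use the embedding $B^{s-2}_{p,r}\hookrightarrow B^{-1}_{\infty,\infty}$, which holds because $s-2-d/p\geq -1$. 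The main obstacle is the remainder $R(u,v)$: since $s-2$ may be negative, the naive bound $\|\widetilde{\Delta}_j v\|_{L^\infty}\leq \|v\|_{L^\infty}$ produces a factor $2^{(j'-j)(s-2)}$ inside the $j$-sum which grows rather than decays as $j$ increases. The remedy is to use Bernstein's inequality to upgrade to $\|\widetilde{\Delta}_j v\|_{L^\infty}\lesssim 2^{-j(s-1-d/p)}d_j$ with $\|d_j\|_{\ell^r}\lesssim \|v\|_{B^{s-1}_{p,r}}$; writing $\|\Delta_j u\|_{L^p}=2^{-j(s-2)}c_j$ and using the strict gain $s-1-d/p>0$ (together with $s>3/2$ to close the edge case $s<2$), Hölder and Young in $\ell^r$ then control the double sum and deliver $\|R(u,v)\|_{B^{s-2}_{p,r}}\lesssim \|u\|_{B^{s-2}_{p,r}}\|v\|_{B^{s-1}_{p,r}}$. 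Adding the three contributions yields the claimed tame estimate.
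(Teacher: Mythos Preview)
The paper does not prove this lemma; it is quoted from \cite{bcd} without argument. Your paraproduct outline is the standard route and is correct for the algebra inequality and for the two paraproduct pieces $T_vu$, $T_uv$ in the second inequality.

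There is, however, a real gap in your treatment of the remainder $R(u,v)$ for the tame estimate. Pairing $\|\Delta_j u\|_{L^p}=2^{-j(s-2)}c_j$ with $\|\widetilde{\Delta}_j v\|_{L^\infty}\lesssim 2^{-j(s-1-d/p)}d_j$ produces, after rearranging, the convolution kernel $2^{k(2s-3-d/p)}\mathbf{1}_{k\le N_0}$, which lies in $\ell^1$ only when $2s-3-d/p>0$. Under the stated hypothesis $s>\max\{1+d/p,\,3/2\}$ this can fail: take $d=2$, $p=10$, $s=1.55$, where $2s-3-d/p=-0.1$. Your parenthetical ``together with $s>3/2$'' does not rescue this particular H\"older split, because the loss of $d/p$ from passing $v$ to $L^\infty$ has already been incurred.

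The fix is to apply Bernstein on the \emph{output} block $\Delta_{j'}$ rather than on $\widetilde{\Delta}_jv$. For $p\ge 2$, estimate $\|\Delta_{j'}(\Delta_ju\,\widetilde{\Delta}_jv)\|_{L^p}\lesssim 2^{j'd/p}\|\Delta_ju\|_{L^p}\|\widetilde{\Delta}_jv\|_{L^p}$; this places $R(u,v)$ in $B^{2s-3}_{p/2,r}$ as soon as $2s-3>0$ (precisely the role of $s>3/2$), and then $B^{2s-3}_{p/2,r}\hookrightarrow B^{2s-3-d/p}_{p,r}\hookrightarrow B^{s-2}_{p,r}$ uses $s>1+d/p$. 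For $1\le p<2$ one uses $L^p\times L^{p'}\to L^1$ followed by Bernstein $L^1\to L^p$, giving the condition $2s-3>d(2/p-1)$, which is implied by $s>1+d/p$ for $d\ge 1$. With this adjustment your argument closes.
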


\begin{lemma}[See \cite{bcd,liy}]\label{tee}
  Let $(p,r)\in[1, \infty]^2$ and $\sigma\geq-\min\big\{\frac d p, 1-\frac d p\big\}$. Assume that $f_0\in B^\sigma_{p,r}(\R^d)$, $g\in L^1([0,T]; B^\sigma_{p,r}(\R^d))$ and $\na\mathbf{u}\in
  L^1([0,T]; B^{\sigma-1}_{p,r}(\R^d))$ if $\sigma>1+\fr d p$  or $\sigma=1+\fr d p, r=1$.
  If $f\in L^\infty([0,T]; B^\sigma_{p,r}(\R^d))\cap \mathcal{C}([0,T]; \mathcal{S}'(\R^d))$ solves the following linear transport equation:
  \begin{equation*}
  \quad \partial_t f+\mathbf{u}\cdot\na f=g,\quad \; f|_{t=0} =f_0.
  \end{equation*}
1. There exists a constant $C=C(\sigma,p,r)$ such that the following statement holds
  \begin{align*}
  \|f(t)\|_{B^\sigma_{p,r}}\leq e^{CV(t)} \Big(\|f_0\|_{B^\sigma_{p,r}}+\int_0^t e^{-CV(\tau)} \|g(\tau)\|_{B^\sigma_{p,r}}\mathrm{d}\tau\Big),
  \end{align*}
  where
  $$V(t)=\int_0^t \|\na \mathbf{u}(\tau)\|_{B^{\sigma-1}_{p,r}}\mathrm{d}\tau\quad \text{if}\quad \sigma>1+{\fr d p} \quad \text{or} \quad \{\sigma=1+\fr d p,\; r=1\}.$$
2. If $\sigma> 0$, then there exists a constant $C = C(\sigma,p,r )$ such that the following holds
\begin{align*}
  \|f(t)\|_{B^\sigma_{p,r}}\leq &\|f_0\|_{B^\sigma_{p,r}}+\int_0^t \|g(\tau)\|_{B^\sigma_{p,r}}\mathrm{d}\tau\\
  &+\int_0^t\Big( \|f(\tau)\|_{B^\sigma_{p,r}}\|\na \mathbf{u}\|_{L^\infty}+\|\na \mathbf{u}\|_{B^{\sigma-1}_{p,r}}\|\na f(\tau)\|_{L^\infty}\Big)\mathrm{d}\tau.
  \end{align*}
  \end{lemma}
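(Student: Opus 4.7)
The plan is to adapt the localized Galilean boost method of Bourgain and Li \cite{bl} to the symmetric formulation \eqref{eps}. Fix $u_0 \in B^s_{p,r}(\R^d)$ and $N > 0$. Choose a scalar bump $\phi \in C_c^\infty(\R^d)$, a unit vector $\mathbf{e} \in \R^d$, set $\lambda_n := 2^n$, and pick spatial centers $x_n \to \infty$. Define
\begin{align*}
f_n(x) := 2^{-N}\phi(x-x_n)\mathbf{e}, \qquad g_n(x) := \lambda_n^{-(s+1)}\phi(x-x_n)\cos(\lambda_n\,\mathbf{e}\cd x)\mathbf{e}.
\end{align*}
Standard dyadic estimates yield $\|f_n\|_{B^s_{p,r}} \lesssim 2^{-N}$ and $\|g_n\|_{B^s_{p,r}} \lesssim \lambda_n^{-1} \to 0$. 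Since $s > d/p$ and $r < \infty$, the embedding $B^s_{p,r}\hookrightarrow C_0(\R^d)$ guarantees $u_0(x_n), \n u_0(x_n)\to 0$, effectively decoupling the perturbations from $u_0$.

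I would then invoke the local well-posedness of \eqref{eps} in $B^s_{p,r}$ from \cite{yy,ldl} to obtain $u_n := S_t(u_0+f_n+g_n)$ and $v_n := S_t(u_0+f_n)$ on a common interval $[0,T_0]$, with $T_0$ independent of $n$. Setting $w_n := u_n - v_n$ and $V_n := \tfrac12(u_n+v_n)$, subtracting the two copies of \eqref{eps} and using the symmetry of $\T$ yields
\begin{align*}
\pa_t w_n + V_n \cd \n w_n + w_n \cd \n V_n = 2\T(w_n, V_n), \qquad w_n|_{t=0} = g_n.
\end{align*}
I would next Taylor-expand $w_n(t) = g_n + t\,\mathcal{E}_n + \mathcal{R}_n(t)$, where direct computation gives
\begin{align*}
\mathcal{E}_n = -(u_0+f_n)\cd\n g_n - g_n\cd\n(u_0+f_n) - g_n\cd\n g_n + 2\T(g_n,u_0+f_n) + \T(g_n,g_n),
\end{align*}
and apply Lemma \ref{tee} to the equation satisfied by $\mathcal{R}_n$, combined with Lemma \ref{pe} and the boundedness of $\T$ (for which $p\in(1,\infty)$ is used via the $L^p$ continuity of the Riesz transform), to conclude $\|\mathcal{R}_n(t)\|_{B^s_{p,r}} \lesssim t^2$ uniformly in $n$.

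The crux is a uniform positive lower bound on $\|\mathcal{E}_n\|_{B^s_{p,r}}$. A direct calculation shows the dominant part of $\mathcal{E}_n$ as $n\to\infty$ is
\begin{align*}
f_n \cd \n g_n \sim -2^{-N}\lambda_n^{-s}\phi(x-x_n)^2\sin(\lambda_n\,\mathbf{e}\cd x)\mathbf{e},
\end{align*}
a Fourier-localized oscillation at frequency $\sim \lambda_n$ of amplitude $\sim 2^{-N}\lambda_n^{-s}$, hence of $B^s_{p,r}$ norm $\gtrsim 2^{-N}$ independently of $n$. The remaining pieces are subordinate: the $u_0$-terms vanish because $|u_0(x_n)|, |\n u_0(x_n)|\to 0$; the $\T$-contributions gain two orders of smoothing from $(I-\De)^{-1}$ and are $O(\lambda_n^{-1})$; the quadratic-in-$g_n$ terms vanish since $\|g_n\|_{B^s_{p,r}}\to 0$. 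This yields $\|\mathcal{E}_n\|_{B^s_{p,r}} \geq c_1\cdot 2^{-N}$ for $n$ large. The triangle inequality then gives
\begin{align*}
\|w_n(t)\|_{B^s_{p,r}} \geq t\|\mathcal{E}_n\|_{B^s_{p,r}} - \|g_n\|_{B^s_{p,r}} - \|\mathcal{R}_n(t)\|_{B^s_{p,r}} \geq c_1 2^{-N}\,t - \lambda_n^{-1} - C t^2,
\end{align*}
and taking $\liminf_{n\to\infty}$, then shrinking $T_0$ so that $Ct \leq \tfrac12 c_1 2^{-N}$ on $[0,T_0]$, produces the claimed bound with $c_0 = \tfrac12 c_1 2^{-N}$.

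The main obstacle is the uniform $O(t^2)$ control of $\mathcal{R}_n$ in the strong norm $B^s_{p,r}$: because $\|g_n\|_{B^{s+1}_{p,r}}$ stays of order one, naive time differentiation of the PDE loses derivatives and jeopardises the $t^2$ decay. The hypothesis $s > 2+\max\{1+d/p,\,3/2\}$ supplies enough spare regularity to absorb this loss via a Bony paraproduct decomposition inside $\T$, combined with the two-order smoothing of $(I-\De)^{-1}$. Should the global $B^s_{p,r}$ estimate on $\mathcal{R}_n$ prove too delicate, an alternative is to lower-bound a single dyadic block $\Delta_n w_n(t)$ (where $2^n\sim\lambda_n$ matches the oscillation frequency), for which the remainder admits a sharper frequency-localized estimate, still delivering the $c_0 t$ bound.
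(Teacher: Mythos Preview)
Your proposal does not address the stated lemma at all. Lemma~\ref{tee} is the standard a~priori estimate for the linear transport equation $\partial_t f + \mathbf{u}\cdot\nabla f = g$ in Besov spaces; the paper does not prove it but simply cites \cite{bcd,liy}. What you have written is instead a sketch of the \emph{main theorem} (Theorem~\ref{T}) on nowhere uniform continuity of the data-to-solution map. Nothing in your argument---the Galilean boost, the perturbations $f_n,g_n$, the Taylor expansion of $w_n$, the lower bound on $\mathcal{E}_n$---bears on the content of Lemma~\ref{tee}, which concerns growth bounds for a single solution of a linear equation in terms of $\|\nabla\mathbf{u}\|_{B^{\sigma-1}_{p,r}}$ and the forcing $g$.

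If the intent was to prove Lemma~\ref{tee}, the correct route is the commutator argument: apply $\Delta_j$ to the transport equation, estimate $\|\Delta_j f\|_{L^p}$ via the flow of $\mathbf{u}$, and control the commutator $[\mathbf{u}\cdot\nabla,\Delta_j]f$ using the Bony decomposition (this is exactly what is done in \cite{bcd}, Chapter~3, and refined in \cite{liy}). If instead you meant to prove Theorem~\ref{T}, note that the paper's strategy differs from yours: rather than a direct Taylor expansion of $S_t(u_0+f_n+g_n)-S_t(u_0+f_n)$, the paper first replaces $u_0$ by $S_n u_0$ (Proposition~\ref{df1}), then uses a translation parameter $m\to\infty$ together with Proposition~\ref{df2} and dominated convergence to decouple $S_t(S_n u_0)$ from $S_t(f_n^m+g_n^m)$, thereby reducing to the already-known bounded-set result \eqref{sfg}. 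Your approach of sending $x_n\to\infty$ and invoking $u_0(x_n)\to 0$ pointwise is in the same spirit but would require substantially more care to make the remainder estimate $\|\mathcal{R}_n\|_{B^s_{p,r}}=O(t^2)$ uniform in $n$, precisely because the $u_0$-dependent terms in $\mathcal{E}_n$ are not controlled merely by point values of $u_0$ at $x_n$.
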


\section{Proof of the main theorem }\label{sec3}

We first recall the local existence and uniqueness theory of solutions for the Cauchy problem \eqref{E-P} in Besov spaces \cite{yy}, then provide some technical lemmas and propositions.
\subsection{Preparation and technical lemmas}
\begin{lemma}[See \cite{yy}]\label{hlocal}
Assume that 
\begin{align}\label{dprs}
  d\in \mathbb N_+, 1\leq p,r\leq\infty~ and ~s>\max\{1+\frac{d}{p},\frac{3}{2}\}. 
  \end{align}
  Let $u_{0}\in B^{s}_{p,r}(\mathbb{R}^{d})$, then there exists a time $T=T(\|u_{0}\|_{B^{s}_{p,r}(\mathbb{R}^{d})})>0$ such that \eqref{E-P} has a unique solution in
\begin{equation*}
  \begin{cases}
  C([0,T];B^{s}_{p,r}(\R^d))\cap C^1([0,T];B^{s-1}_{p,r}(\R^d)), &if~~ r<\infty,\\
  L^\infty([0,T];B^{s}_{p,\infty}(\R^d))\cap Lip([0,T];B^{s-1}_{p,\infty}(\R^d)), &if~~ r=\infty.
  \end{cases}
\end{equation*}
 And the mapping $u_0\mapsto u$ is continuous from $B^{s}_{p,r}(\R^d)$ into 
$C([0,T];B^{s'}_{p,r}(\R^d))\cap C^1([0,T];B^{s'-1}_{p,r}(\R^d))$
for all $s'<s$ if $r=\infty$, and $s'=s$ otherwise. Moreover, for all $t\in[0,T]$, there holds
\begin{align*}
\|u(t)\|_{B^{s}_{p,r}(\mathbb{R}^{d})}\leq C\|u_{0}\|_{B^{s}_{p,r}(\mathbb{R}^{d})}.
\end{align*}
\end{lemma}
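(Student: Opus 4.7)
The plan is to work with the reformulation \eqref{lns}, namely $\partial_t u+u\cdot\nabla u=Q(u,u)+R(u,u)$, and follow the classical Friedrichs iteration scheme for quasilinear transport equations in Besov spaces, supplemented by a Bona--Smith regularization argument for the continuity of the solution map. The key structural observation that makes the derivative count close is that $(I-\Delta)^{-1}\mathrm{div}$ is a Fourier multiplier of order $-1$, so it maps $B^{s-1}_{p,r}$ into $B^{s}_{p,r}$ boundedly, while $(I-\Delta)^{-1}$ gains two derivatives. Combined with the product law in Lemma~\ref{pe}, this yields the nonlinear bound
\[
\|Q(u,u)+R(u,u)\|_{B^{s}_{p,r}}\lesssim \|u\|_{B^{s}_{p,r}}^{2}
\]
under the hypothesis $s>\max\{1+\tfrac{d}{p},\tfrac32\}$, which is what drives the whole argument.

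First I would construct iterates by setting $u^{0}\equiv 0$ and defining $u^{n+1}$ as the unique solution of the linear transport problem
\[
\partial_t u^{n+1}+u^{n}\cdot\nabla u^{n+1}=Q(u^{n},u^{n})+R(u^{n},u^{n}),\qquad u^{n+1}|_{t=0}=S_{n+1}u_{0},
\]
where $S_{n+1}$ is a low-frequency cut-off ensuring each iterate is smooth. Applying Lemma~\ref{tee}(1) together with the nonlinear estimate above and a standard bootstrap produces a time $T\sim\|u_{0}\|_{B^{s}_{p,r}}^{-1}$ on which $\{u^{n}\}$ is uniformly bounded in $L^{\infty}([0,T];B^{s}_{p,r})$. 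Writing the transport equation for the difference $u^{n+1}-u^{n}$, the right-hand side is controlled at regularity $s-1$ (one derivative is spent), so Lemma~\ref{tee}(2) at level $s-1$ yields a contraction in $C([0,T];B^{s-1}_{p,r})$. Passing to the limit gives a solution $u\in L^{\infty}([0,T];B^{s}_{p,r})\cap C([0,T];B^{s-1}_{p,r})$; upgrading the time regularity to $C([0,T];B^{s}_{p,r})$ (respectively the $L^{\infty}/\mathrm{Lip}$ statement when $r=\infty$) uses the Fatou property of Besov spaces together with an interpolation via Lemma~\ref{inte}, after which $\partial_{t}u\in C([0,T];B^{s-1}_{p,r})$ is read directly off the equation.

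For uniqueness, two solutions $u_{1},u_{2}$ with the same datum satisfy a linear transport equation for $u_{1}-u_{2}$ with a source controlled in $B^{s-1}_{p,r}$ by $\|u_{1}-u_{2}\|_{B^{s-1}_{p,r}}$; Lemma~\ref{tee}(2) and Gr\"onwall force the difference to vanish. The same Gr\"onwall form of Lemma~\ref{tee}(1) applied to $u$ gives
\[
\|u(t)\|_{B^{s}_{p,r}}\leq \exp\!\Big(C\!\int_{0}^{t}\|\nabla u(\tau)\|_{B^{s-1}_{p,r}}\,\dd\tau\Big)\|u_{0}\|_{B^{s}_{p,r}},
\]
and choosing $T$ small enough that the exponential factor is bounded by a universal constant produces the claimed linear-in-norm estimate $\|u(t)\|_{B^{s}_{p,r}}\le C\|u_{0}\|_{B^{s}_{p,r}}$.

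The hard part will be the \emph{continuity} of the solution map into $C([0,T];B^{s}_{p,r})$, because convergence of the iterates has only been shown at regularity $s-1$. Here I would run a Bona--Smith argument: for convergent data $u_{0}^{n}\to u_{0}$ in $B^{s}_{p,r}$, compare the mollified solutions $u_{\varepsilon}^{n},u_{\varepsilon}$ issued from $S_{\varepsilon}u_{0}^{n},S_{\varepsilon}u_{0}$, exploiting a uniform bound on $u_{\varepsilon}^{n}$ in $B^{s+1}_{p,r}$ at the cost of a factor $\varepsilon^{-1}$, together with the $B^{s-1}_{p,r}$-contraction. Splitting $\|u^{n}-u\|_{B^{s}_{p,r}}$ into a truncation error controlled by $\varepsilon$ and a fixed-$\varepsilon$ error that tends to zero with $n$, then optimizing in $\varepsilon$, yields the continuity. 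The case $r=\infty$ is exactly where the $B^{s}_{p,\infty}$-norm fails to be continuous under low-frequency truncation, which is why a genuine loss $s'<s$ is forced in the statement.
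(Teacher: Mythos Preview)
The paper does not prove this lemma; it is quoted verbatim from Yan--Yin \cite{yy} and stated without proof. So there is no ``paper's own proof'' to compare against. Your outline is the standard Friedrichs-iteration plus Bona--Smith argument that the cited reference in fact uses, and the sketch is sound: the $S^{-1}$-multiplier observation for $(I-\Delta)^{-1}\mathrm{div}$ together with Lemma~\ref{pe} gives the quadratic bound on the nonlocal source, Lemma~\ref{tee} closes the uniform $B^{s}_{p,r}$ bound on the iterates, the $B^{s-1}_{p,r}$ contraction and Bona--Smith regularization are exactly how continuity at the top index is recovered, and the necessity of the loss $s'<s$ when $r=\infty$ is correctly identified.

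One small point to watch if you flesh this out: when you invoke Lemma~\ref{tee} for the difference at level $\sigma=s-1$, part~(1) as stated in this paper requires $\sigma>1+\tfrac{d}{p}$, which you do \emph{not} have. You need the companion version of the transport estimate (also in \cite{bcd,liy}) valid for $-\min\{\tfrac{d}{p},1-\tfrac{d}{p}\}<\sigma\le 1+\tfrac{d}{p}$ with $V(t)=\int_{0}^{t}\|\nabla u\|_{B^{d/p}_{p,r}\cap L^{\infty}}\,\dd\tau$, where the velocity lives at the higher regularity $B^{s}_{p,r}$ while the transported quantity sits at $B^{s-1}_{p,r}$. This is what actually closes the Cauchy estimate in \cite{yy}; your reference to ``Lemma~\ref{tee}(2)'' would also need $\|\nabla f\|_{L^{\infty}}$ bounded, which is not available at level $s-1$ alone.
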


\begin{lemma}\label{3t}
  Let $(s,p,r)$ satisfy (\ref{spr}), then for the symmetric bilinear operator  $\T(f,g)$ defined by (\ref{qr}) and (\ref{sbo}), we have
       \begin{align}\label{etfg}
       &\|\T(f,g)\|_{B^{s}_{p,r}}\leq C\|f\|_{B^{s}_{p,r}}\|g\|_{B^{s}_{p,r}}
       \end{align}
   If $0<p<\infty$, there holds
   \begin{align}
     \|\T (f,g)\|_{L^p} &\leq \|\nabla f\|_{L^p}\|g, \nabla g\|_{L^\infty} \label{et1}\\
     \|\T (f,g)\|_{L^p} &\leq \sum_{0\leq |a|,|b|\leq 1}\|\partial^af\partial^bg\|_{L^p}=W_{1,p}(f,g)\label{et2}
   \end{align}
   And, for the gradient $\na\T$, we have
   \begin{align}
     \|\nabla\T (f,g)\|_{L^p} &\leq \|\nabla f\|_{L^p}\|g, \nabla g\|_{L^\infty}\label{egt1}\\
     \|\nabla\T (f,g)\|_{L^p}  &\leq \sum_{0\leq |a|,|b|\leq 2}\|\partial^af\partial^bg\|_{L^p}=W_{2,p}(f,g)\label{egt2}
   \end{align}
   Where we denote $\displaystyle{ W_{m,p}(f,g)=\sum_{0\leq |a|,|b|\leq m}\|\partial^af\partial^bg\|_{L^p}}$ with the multiindex $a=(a_1,a_2,\cdots,a_d),~|a|=a_1+\cdots+a_d$ and $\pa^a=\fr{\pa^{|a|}}{\pa x_1^{a_1}\cdots \pa x_d^{a_d}}$.
   \end{lemma}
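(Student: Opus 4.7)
The plan is to split the operator as $\T = \T_1 + \T_2$ with
\begin{equation*}
\T_1(f,g) = -(I-\De)^{-1}\div\bigl(M(\na f,\na g)\bigr),\qquad \T_2(f,g) = -(I-\De)^{-1}\bigl(N(f,\na f;g,\na g)\bigr),
\end{equation*}
as in \eqref{sbo}, and to prove each of the five claimed inequalities by a common two-step scheme: (i) a Fourier-multiplier bound for the outer operator acting on the relevant target space, followed by (ii) a Hölder or product estimate for the inner bilinear form. Throughout, the structural facts I rely on are that $M$ is bilinear in $(\na f,\na g)$ while $N$ is a bilinear combination of monomials of the form $f\,\na g$ and $\na f\,g$.

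For the Besov bound \eqref{etfg}, I will use that $(I-\De)^{-1}\div$ is smoothing of order one, so it maps $B^{\sigma}_{p,r}$ into $B^{\sigma+1}_{p,r}$, and $(I-\De)^{-1}$ is smoothing of order two. Since $s>1+\tfrac{d}{p}$, the space $B^{s-1}_{p,r}$ is an algebra, so the first part of Lemma~\ref{pe} gives $\|M(\na f,\na g)\|_{B^{s-1}_{p,r}}\lesssim\|\na f\|_{B^{s-1}_{p,r}}\|\na g\|_{B^{s-1}_{p,r}}\lesssim\|f\|_{B^s_{p,r}}\|g\|_{B^s_{p,r}}$; mapping into $B^s_{p,r}$ through $(I-\De)^{-1}\div$ yields the $\T_1$ contribution. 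For $\T_2$, each monomial in $N$ is handled by the second part of Lemma~\ref{pe} in $B^{s-2}_{p,r}$, using one $B^{s-2}_{p,r}$ factor and one $B^{s-1}_{p,r}$ factor, and then promoted to $B^s_{p,r}$ by the two-derivative gain of $(I-\De)^{-1}$.

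For the $L^p$ estimates \eqref{et1} and \eqref{et2}, I will exploit that $p\in(1,\infty)$ makes both $(I-\De)^{-1}\div$ and $(I-\De)^{-1}$ bounded on $L^p$ (the first by Mikhlin, the second even gaining two derivatives). Once the outer multipliers are stripped, the remaining $L^p$ bounds on $M(\na f,\na g)$ and $N(f,\na f;g,\na g)$ follow from Hölder's inequality monomial by monomial. For \eqref{et1} I pair an $L^p$-factor with an $L^\infty$-factor, while for \eqref{et2} I simply leave each product $\pa^a f\,\pa^b g$ inside the $L^p$-norm, which is the definition of $W_{1,p}(f,g)$.

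The gradient bounds \eqref{egt1} and \eqref{egt2} are the crux, and the step I expect to be the main obstacle. Their outer operators are $\na(I-\De)^{-1}\div$ and $\na(I-\De)^{-1}$: the second still gains one derivative and is $L^p$-bounded for free, but the first has symbol $-\xi\otimes\xi/(1+|\xi|^2)$, a zero-order Hörmander--Mikhlin multiplier. Its $L^p$-boundedness rests on the boundedness of the Riesz transforms and therefore holds \emph{precisely} when $p\in(1,\infty)$; this is the unique place in the lemma where the endpoints $p=1,\infty$ are ruled out, exactly as flagged in the remark following Theorem~\ref{T}. After this multiplier bound is in place, Hölder applied to $M$ (two derivatives on one side, $L^\infty$-norm on the other) and to the monomials in $N$ (up to two derivatives altogether, packaged in the $W_{2,p}$ sum) produces \eqref{egt1} and \eqref{egt2} respectively.
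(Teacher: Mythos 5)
Your proposal is correct and follows essentially the same route as the paper: split $\T$ into the divergence part and the zero-order part, bound the outer multipliers (using that $(I-\De)^{-1}$ is an $S^{-2}$ multiplier for the Besov estimate, and that $\na(I-\De)^{-1}\div$ factors through Riesz transforms, hence requires $p\in(1,\infty)$, for the gradient $L^p$ estimate), then finish with Lemma~\ref{pe} or Hölder on the inner bilinear forms. The only difference is cosmetic: the paper writes $(I-\De)^{-1}\pa_i\pa_j$ explicitly as a bounded multiplier composed with $R_iR_j$, while you invoke the Hörmander--Mikhlin criterion for the symbol $-\xi\otimes\xi/(1+|\xi|^2)$, which amounts to the same thing.
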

   \begin{proof}
   As the operator $(I-\Delta)^{-1}$ is a Fourier $S^{-2}$-multiplier, it's easy to see that
   \begin{align*}
     \|\T(f,g)\|_{B^{s}_{p,r}}\leq C\|M(\nabla f,\nabla g)\|_{B^{s-1}_{p,r}}+C\|N(f,\nabla f, g,\nabla g)\|_{B^{s-2}_{p,r}}\leq C\|f\|_{B^{s}_{p,r}}\|g\|_{B^{s}_{p,r}},
     \end{align*}
   here we have use the Lemma \ref{pe}. Then in $L^p$ spaces,
     \begin{align*}
     \|\T (f,g)\|_{L^p} &= \|(I-\De)^{-1}\mathrm{div} \big(M(\nabla f,\nabla g)\big)+(I-\De)^{-1}\big(N(f,\nabla f, g,\nabla g)\big)\|_{L^p}
     \\ &\leq \|M(\nabla f,\nabla g)\|_{L^p}+\|N(f,\nabla f, g,\nabla g)\|_{L^p}
     \\ &\leq \|\nabla f\|_{L^p}\|\nabla g\|_{L^\infty}+\|\nabla f\|_{L^p}\| g\|_{L^\infty}
     \\ &\leq \|\nabla f\|_{L^p}\|g, \nabla g\|_{L^\infty}
     \end{align*}
     we also have
     \begin{align*}
       \|\T (f,g)\|_{L^p} &\leq \|M(\nabla f,\nabla g)\|_{L^p}+\|N(f,\nabla f, g,\nabla g)\|_{L^p}
       \\ &\leq \sum_{0\leq |a|,|b|\leq 1}\|\partial^af\partial^bg\|_{L^p}=W_{1,p}(f,g)
     \end{align*}  
     For the gradient $\na\T$, noting that $(I-\De)^{-1}\pa_i\pa_j=-\De(I-\De)^{-1}\big((-\De)^{-1}\pa_i\pa_j\big)=\big((1-\De)^{-1}+1\big)R_iR_j$ and the Riesz transform $R_i$ is bounded in $L^p\to L^p,~p\in(1,\infty)$, then we have
     \begin{align*}
     \|\nabla\T (f,g)\|_{L^p} &= \|\nabla(I-\De)^{-1}\mathrm{div} \big(M(\nabla f,\nabla g)\big)+\nabla(I-\De)^{-1}\big(N(f,\nabla f, g,\nabla g)\big)\|_{L^p}
     \\ &\leq \|M(\nabla f,\nabla g)\|_{L^p}+\|N(f,\nabla f, g,\nabla g)\|_{L^p}
     \\ &\leq \|\nabla f\|_{L^p}\|\nabla g\|_{L^\infty}+\|\nabla f\|_{L^p}\| g\|_{L^\infty}
     \\ &\leq \|\nabla f\|_{L^p}\|g, \nabla g\|_{L^\infty}
     \end{align*}
     and
     \begin{align*}
      \|\nabla\T (f,g)\|_{L^p} &\leq \|\div M(\nabla f,\nabla g)\|_{L^p}+\|N(f,\nabla f, g,\nabla g)\|_{L^p}
      \\ &\leq \sum_{0\leq |a|,|b|\leq 2}\|\partial^af\partial^bg\|_{L^p}=W_{2,p}(f,g)
     \end{align*}
   \end{proof}

We'll need the following estimates of the difference $u(t)-v(t)$ in Besov spaces.

\begin{proposition}\label{df1}
  Let $ 1\leq p,r\leq\infty$ and $s>\max\{1+\frac{d}{p},\frac{3}{2}\}$. Assume that  $u(t),v(t)$ are solutions of (\ref{eps}) with initial data $(u_{0},v_0)\in B^{s}_{p,r}(\mathbb{R}^{d})$, then $\delta(t):=u(t)-v(t)$ satisfies
  \begin{align*}
\|\delta(t)\|_{B^{s-1}_{p,r}}\leq \|\delta_0\|_{B^{s-1}_{p,r}}\exp\big(C\int_0^t\|u(\tau),v(\tau)\|_{B^{s}_{p,r}}d\tau\big)
  \end{align*}
  and
  \begin{align}
    \|\delta(t)\|_{B^{s}_{p,r}}\leq \Big(\|\delta_0\|_{B^{s}_{p,r}}+C\int_0^t\|\delta\|_{B^{s-1}_{p,r}}\|\nabla v\|_{B^{s}_{p,r}}d\tau\Big)\exp\big(C\int_0^t\|u(\tau),v(\tau)\|_{B^{s}_{p,r}}d\tau\big)\label{del2}
      \end{align}
    \end{proposition}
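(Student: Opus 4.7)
The plan is to derive a transport equation for the difference $\delta := u - v$ and then apply the transport estimate of Lemma \ref{tee} at two successive regularity levels. Subtracting the two copies of (\ref{eps}), rewriting $u\cdot\na u - v\cdot\na v = u\cdot\na\delta + \delta\cdot\na v$, and using the bilinearity and symmetry of $\T$ (so that $\T(u,u)-\T(v,v) = \T(u,\delta)+\T(\delta,v)$), one obtains
\begin{align*}
\pa_t\delta + u\cdot\na\delta = -\delta\cdot\na v + \T(u,\delta) + \T(\delta,v) =: G.
\end{align*}

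For the first estimate I would apply part~1 of Lemma \ref{tee} with $\sigma = s-1$ and drift $u$, for which the quantity $V(t)$ is controlled by $C\int_0^t\|u\|_{B^s_{p,r}}d\tau$. Everything reduces to showing $\|G\|_{B^{s-1}_{p,r}} \leq C(\|u\|_{B^s_{p,r}}+\|v\|_{B^s_{p,r}})\|\delta\|_{B^{s-1}_{p,r}}$, after which Gr\"onwall yields the stated bound. The term $\delta\cdot\na v$ is handled by the algebra property of $B^{s-1}_{p,r}$ (valid since $s-1>d/p$) together with Lemma \ref{pe}. The main technical obstacle is estimating $\T(u,\delta)$ and $\T(\delta,v)$ at the low regularity $s-1$, because Lemma \ref{3t} only supplies a $B^s$-bound. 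I would unfold the definition (\ref{sbo}): since $(I-\De)^{-1}\mathrm{div}$ and $(I-\De)^{-1}$ smooth by one and two Besov derivatives respectively, it suffices to estimate $M(\na u,\na\delta)$ in $B^{s-2}_{p,r}$ and $N(u,\na u,\delta,\na\delta)$ in $B^{s-3}_{p,r}$. Applying the sharper product estimate $\|fg\|_{B^{s-2}_{p,r}}\leq C\|f\|_{B^{s-1}_{p,r}}\|g\|_{B^{s-2}_{p,r}}$ from Lemma \ref{pe} to each bilinear term, and placing the extra derivative on the $u$- or $v$-factor throughout, yields $\|\T(u,\delta)\|_{B^{s-1}_{p,r}}+\|\T(\delta,v)\|_{B^{s-1}_{p,r}}\leq C(\|u\|_{B^s_{p,r}}+\|v\|_{B^s_{p,r}})\|\delta\|_{B^{s-1}_{p,r}}$.

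For the second estimate, I would again apply part~1 of Lemma \ref{tee}, now with $\sigma = s$. The $\T$-contributions are now harmless: Lemma \ref{3t} directly gives $\|\T(u,\delta)\|_{B^s_{p,r}}+\|\T(\delta,v)\|_{B^s_{p,r}}\leq C(\|u\|_{B^s_{p,r}}+\|v\|_{B^s_{p,r}})\|\delta\|_{B^s_{p,r}}$, which absorb into the Gr\"onwall exponent. The genuine obstacle is $\delta\cdot\na v$: the Leibniz-type estimate in $B^s_{p,r}$ from Lemma \ref{pe} forces a term carrying $\|\na v\|_{B^s_{p,r}}$ paired with $\|\delta\|_{L^\infty}$, which cannot be absorbed into the exponential since $\|v\|_{B^{s+1}_{p,r}}$ is not under control. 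To keep this term on the source side, I would invoke the embedding $B^{s-1}_{p,r}\hookrightarrow L^\infty$ to split
\begin{align*}
\|\delta\cdot\na v\|_{B^s_{p,r}}\leq C\|\delta\|_{B^s_{p,r}}\|v\|_{B^s_{p,r}}+C\|\delta\|_{B^{s-1}_{p,r}}\|\na v\|_{B^s_{p,r}}.
\end{align*}
The first summand couples back to $\|\delta\|_{B^s_{p,r}}$ and enters the Gr\"onwall exponent, while the second, featuring the lower regularity norm $\|\delta\|_{B^{s-1}_{p,r}}$, remains in the source integral. A final Gr\"onwall step then produces (\ref{del2}) in the stated form.
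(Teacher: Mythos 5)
Your proof is correct and follows essentially the same route as the paper: you derive the identical symmetric transport equation for $\delta$ (the paper's equation for $\delta$, obtained from bilinearity and symmetry of $\T$), apply the transport estimate of Lemma~\ref{tee} at level $\sigma=s$, and split $\|\delta\cdot\nabla v\|_{B^s_{p,r}}$ so that the factor $\|\nabla v\|_{B^s_{p,r}}$ is paired with the lower norm $\|\delta\|_{B^{s-1}_{p,r}}$ and kept as a source term while the other piece is absorbed into the Gr\"onwall exponent. The one substantive difference is that you also sketch a proof of the $B^{s-1}_{p,r}$ bound, which the paper simply cites from Yan--Yin; there, note that part~1 of Lemma~\ref{tee} as stated prescribes $V(t)=\int_0^t\|\nabla\mathbf{u}\|_{B^{\sigma-1}_{p,r}}d\tau$ only under $\sigma>1+\frac dp$, so at $\sigma=s-1$ this literal form requires $s>2+\frac dp$, and for $1+\frac dp<s\leq 2+\frac dp$ one must invoke the general BCD transport estimate with $V(t)$ built from $\|\nabla\mathbf{u}\|_{B^{d/p}_{p,\infty}\cap L^\infty}$ (still controlled by $\|u\|_{B^s_{p,r}}$), a point your sketch glosses over but which does not affect the conclusion.
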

\begin{proof}
The first inequality has been proved in \cite{yy}, it remains to prove (\ref{del2}). As $\T$ is a symmetric bilinear operator, it's easy to deduce that $\delta=u-v$ solves the transport equation
\begin{align}\label{del}
  \pa_t\delta+u\cdot\na \delta=-\delta\cdot\na v+\T(\delta,u+v).
\end{align}
Then, by Lemma \ref{tee} and \ref{3t}
\begin{align*}
  \|\delta(t)\|_{B^{s}_{p,r}}\leq& \|\delta_0\|_{B^{s}_{p,r}}+C\int_0^t\Big(\|u\|_{B^{s}_{p,r}}\|\delta\|_{B^{s}_{p,r}}+\|\delta\cdot\nabla v\|_{B^{s}_{p,r}} +\|\T(\delta,u+v)\|_{B^{s}_{p,r}}\Big)d\tau \\
  \leq & \|\delta_0\|_{B^{s}_{p,r}}+C\int_0^t\Big(\|u(\tau),v(\tau)\|_{B^{s}_{p,r}}\|\delta\|_{B^{s}_{p,r}}+\|\delta\|_{B^{s-1}_{p,r}}\|\nabla v\|_{B^{s}_{p,r}}\Big)d\tau.
\end{align*}
now (\ref{del2}) is direct result from Gronwall's inequality.
\end{proof}
\begin{proposition}\label{df2}
  Suppose $\su(t),u(t),v(t)$ are the solutions of (\ref{eps}) of initial data $u_0+v_0, u_0, v_0$ respectively. Then, under the assumptions of (\ref{spr}), we have
\begin{align*}
\|\su-u-v\|_{B^{s}_{p,r}}\leq C \|u_0,v_0\|_{B^{s+1}_{p,\infty}}^{1-\theta}\exp\Big(\|u_0,v_0\|_{B^{s+1}_{p,r}}\theta\Big)\Big(\int_{0}^{t}\sL (u,v) d\tau\Big)^\theta,
\end{align*}
where $\theta=\frac{1}{s+1}$ and use the notation $\displaystyle{W_{2,p}(u,v)=\sum_{0\leq |a|,|b|\leq 2}\|\partial^au\partial^bv\|_{L^p}}$.
\end{proposition}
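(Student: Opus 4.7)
The plan is to introduce $w:=\tilde u-u-v$, derive a transport equation for $w$ with $w(0)=0$, control $w$ simultaneously in a low and a high Besov norm, and conclude by the interpolation inequality of Lemma \ref{inte}. The weight $\theta=1/(s+1)$ in the statement is precisely what interpolates the pair $(B^0_{p,\infty},B^{s+1}_{p,\infty})$ into $B^s_{p,1}\hookrightarrow B^s_{p,r}$.

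Subtracting the three equations and substituting $\tilde u=w+u+v$, and using the bilinearity and symmetry of $\T$, I obtain
$$\partial_t w+\tilde u\cdot\nabla w=-w\cdot\nabla(u+v)+\T(w,w)+2\T(w,u+v)+F_0(u,v),$$
with forcing $F_0(u,v):=2\T(u,v)-u\cdot\nabla v-v\cdot\nabla u$ depending only on $u$ and $v$. Lemma \ref{3t} (specifically (\ref{et2}) and (\ref{egt2})) immediately gives $\|F_0\|_{L^p}\lesssim W_{2,p}(u,v)$; every remaining term on the right is a $w$-type factor that will ultimately be absorbed into a Gronwall factor.

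For the low-regularity bound I run the $L^p$ energy estimate on the transport equation (multiplying by $|w|^{p-2}w$, integrating, and integrating by parts on the transport term, producing a Gronwall factor controlled by $\|\operatorname{div}\tilde u\|_{L^\infty}$). Combining (\ref{et1})--(\ref{et2}) with the embedding $B^s_{p,r}\hookrightarrow W^{1,\infty}$ (valid since $s>1+d/p$) and Lemma \ref{hlocal} allows me to bound $\|\T(w,w)\|_{L^p}$ and $\|\T(w,u+v)\|_{L^p}$ by $\|w\|_{L^p}$ times a quantity controlled by $\|u_0,v_0\|_{B^s_{p,r}}$. Using $w(0)=0$, Gronwall gives
$$\|w(t)\|_{L^p}\leq C\exp\!\Bigl(C\!\int_0^t\!\|u,v,\tilde u\|_{B^s_{p,r}}\,d\tau\Bigr)\int_0^t W_{2,p}(u,v)(\tau)\,d\tau.$$
For the high-regularity bound, since $(s+1,p,r)$ still satisfies the hypotheses of Lemma \ref{hlocal}, persistence of $B^{s+1}_{p,\infty}$-regularity on each of $\tilde u,u,v$ together with the triangle inequality yields
$$\|w(t)\|_{B^{s+1}_{p,\infty}}\lesssim \|u_0,v_0\|_{B^{s+1}_{p,\infty}}\exp\!\Bigl(C\!\int_0^t\!\|u_0,v_0\|_{B^{s+1}_{p,r}}\,d\tau\Bigr).$$
Finally I apply the second inequality of Lemma \ref{inte} with $s_1=0$, $s_2=s+1$, $\theta=1/(s+1)$, together with $L^p\hookrightarrow B^0_{p,\infty}$ and $B^s_{p,1}\hookrightarrow B^s_{p,r}$, to get $\|w\|_{B^s_{p,r}}\lesssim \|w\|_{L^p}^\theta\|w\|_{B^{s+1}_{p,\infty}}^{1-\theta}$, and substituting the two preceding bounds delivers exactly the claimed inequality.

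The main obstacle is the low-regularity step: the bilinear terms $\T(w,w)$ and $\T(w,u+v)$ naturally carry a $\nabla w$ in the $L^p$ slot through (\ref{et1}), which an $L^p$ Gronwall cannot absorb directly. One must instead combine (\ref{et1}) and (\ref{et2}) so that every occurrence of $\partial^a w$ with $|a|=1$ is paired against the $L^\infty$ norm of $u,v$ (or their first derivatives), leaving only $\|w\|_{L^p}$ on the right-hand side so that Gronwall actually closes. This delicate redistribution of derivatives is precisely why Lemma \ref{3t} records the two complementary product bounds.
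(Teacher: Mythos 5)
Your overall architecture matches the paper's (transport equation for $w=\su-u-v$, interpolation between $B^0_{p,\infty}$ and $B^{s+1}_{p,\infty}$ with exponent $\theta=\tfrac{1}{s+1}$, then a Gronwall argument on a low norm of $w$), but the low-regularity step as you describe it does not close, and you have correctly sensed but not resolved the obstacle. Looking at the structure of $\T$ in \eqref{sbo}, the leading term is $-(I-\De)^{-1}\div\big(M(\nabla f,\nabla g)\big)$ with $M$ bilinear in $\nabla f,\nabla g$; the two bounds \eqref{et1}, \eqref{et2} therefore \emph{necessarily} place a full derivative of the $L^p$-slot argument inside the $L^p$ norm. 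For $\T(w,\su+u+v)$ this means $\|\nabla w\|_{L^p}\|\su,\nabla\su,u,\nabla u,v,\nabla v\|_{L^\infty}$ appears on the right. By symmetry one could instead write $\|\nabla(\su+u+v)\|_{L^p}\|w,\nabla w\|_{L^\infty}$, but then $\nabla w$ sits in $L^\infty$, which is even less controllable. There is no ``redistribution of derivatives'' in \eqref{et1}--\eqref{et2} that removes the derivative from $w$; your claimed closed Gronwall inequality for $\|w\|_{L^p}$ alone is therefore unattainable.

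The paper's actual fix is to run a \emph{coupled} Gronwall on $\|w\|_{L^p}+\|\nabla w\|_{L^p}$: one differentiates the $w$-equation \eqref{weq}, performs the $L^p$ energy estimate on $\nabla w$ (using $\na\w^{p-1}$ as the test object), and invokes \eqref{egt1}, \eqref{egt2} for $\nabla\T$, so that each appearance of $\nabla w$ on the right is absorbed by the combined quantity. This is also exactly where the $B^{s+1}_{p,r}$ regularity is forced: the $\nabla w$ estimate produces $\|\nabla^2 u\|_{L^\infty},\|\nabla^2 v\|_{L^\infty}$ factors, which require $u_0,v_0\in B^{s+1}_{p,r}$ and explain why the Proposition's exponential involves $\|u_0,v_0\|_{B^{s+1}_{p,r}}$ rather than the $\|u,v,\su\|_{B^s_{p,r}}$ you wrote. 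In summary, your proposal is missing the parallel $\nabla w$ energy estimate and the combined Gronwall that the paper uses, and the remedy you sketch at the end does not exist as stated.
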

\begin{proof}
Since $\su(t),u(t),v(t)$ are solutions of 
\begin{equation*}
  \begin{cases}
    \partial_t\su+\su\cdot \nabla \su= \T(\su,\su), \qquad &\su(0)=u_0+v_0,\\
    \partial_tu+u\cdot \nabla u= \T(u,u), \qquad &u(0)=u_0,\\
    \partial_tv+v\cdot \nabla v= \T(v,v), \qquad &v(0)=v_0.
  \end{cases}
  \end{equation*}
by the symmetry and linearity of $\T$, we can deduce that $w(t)=\su(t)-u(t)-v(t)$ satisfies
\begin{equation}\label{weq}
  \begin{cases}
    \partial_tw+\su\cdot \nabla w=&-w\cdot \nabla(u+v)+ \T(w,\su+u+v)\\
    &-u\cdot \nabla v-v\cdot \nabla u-2\T(u,v),\\
    \qquad \qquad w(0)=&0.
  \end{cases}
  \end{equation}
  By the interpolation inequality (see Lemma \ref{inte} ), we obtain
  \begin{align}\label{iw}
\|w\|_{B^{s}_{p,r}}\leq C\|w\|_{B^0_{p,\infty}}^\theta\|w\|_{B^{s+1}_{p,\infty}}^{1-\theta}\leq \|u_0,v_0\|_{B^{s+1}_{p,\infty}}^{1-\theta}\|w\|_{L^p}^\theta
  \end{align}
The rest of the proof is to bound the $L^p$ norm of $w$, taking the inner product of  (\ref{weq}) with $\w ^{p-1}:=(|w_1|^{p-2}w_1,|w_2|^{p-2}w_2,\cdots,|w_d|^{p-2}w_d)$,  we obtain
\begin{align}\label{ew1}
\frac1p\frac{d}{dt}\|w\|_{L^p}^p=&\sum_{i=1}^d\int p^{-1}|w_i|^p(\div \su) dx-\sum_{i,j}\int \w_i^{p-1}w_j\partial_j(u_i+v_i) dx\nonumber
\\ &+\sum_{i=1}^d\int \w_i^{p-1}\T_i(w,\su+u+v) dx-\sum_{i=1}^d\int \w_i^{p-1}(u\cdot \nabla v_i+v\cdot \nabla u_i+\T_i(u,v) dx\nonumber
\\ \leq& \frac1p\|\div \su\|_{L^\infty}\|w\|_{L^p}^p+C_d(\|\nabla u\|_{L^\infty}+\|\nabla v\|_{L^\infty})\|w\|_{L^p}^p \nonumber
\\&+\|w\|_{L^p}^{p-1}\|\T(w,\su+u+v)\|_{L^p}+\|w\|_{L^p}^{p-1}\|u\cdot \nabla v+v\cdot \nabla u+\T(u,v)\|_{L^p}
\end{align}
Thanks to the estimates of $\T$ in Lemma \ref{3t}, in particular take (\ref{et1}), (\ref{et2}), into (\ref{ew1}) we have 
  \begin{align*}
    \frac{d}{dt}\|w\|_{L^p} &\leq C(\|\div \su\|_{L^\infty}+\|\nabla u\|_{L^\infty}+\|\nabla v\|_{L^\infty})\|w\|_{L^p}
    \\&+C(\|\su,\nabla \su\|_{L^\infty}+\|u,\nabla u\|_{L^\infty}+\|v,\nabla v\|_{L^\infty})\|\nabla w\|_{L^p}+\sL(u,v)
    \\ &\leq \|u_0,v_0\|_{B^{s}_{p,r}}(\|w\|_{L^p}+\|\nabla w\|_{L^p})+\sL(u,v)
    \end{align*}
Now, we should bound the gradient matrix $\nabla w$, take the gradient to (\ref{weq}), then in components
\begin{align*}
\pa_t\pa_jw_i=&-\su_k\pa_k\pa_jw_i-\pa_j\su_k\pa_kw_i+\pa_jT_i(w,\su+u+v)
\\ &-w_k\pa_k(\pa_ju_i+\pa_jv_i)-\pa_jw_k\pa_k(u_i+v_i)-\pa_j\big(u_k\pa_kv_i-v_k\pa_k u_i-2\T_i(u,v)\big)
\end{align*}
Taking the $L^2$ inner product with $\w_{i,j}^{p-1}:=|\pa_jw_i|^{p-2}\pa_jw_i$ and sum the indices $i,j$, we get
\begin{align}\label{egw1}
  \frac1p\frac{d}{dt}\|\nabla w\|_{L^p}^p=&\sum_{1\leq i,j\leq d}\int p^{-1}|\pa_jw_i|^p(\div \su) dx-\int \nabla\w^{p-1}:(\nabla w \nabla \su)dx+\int \nabla\w^{p-1}:\nabla\T(w,\su+u+v) dx\nonumber
  \\ &-\int \nabla\w^{p-1}:\big(w\cdot \nabla (\nabla u+\nabla v)\big) dx-\int \nabla\w^{p-1}:\big((\nabla u+\nabla v)\nabla w\big) dx\nonumber
  \\ &-\int \nabla\w^{p-1}:\nabla(u\cdot\nabla v+v\cdot\nabla u+2\T(u,v))dx\nonumber
  \\ \leq& \frac1p\|\div \su\|_{L^\infty}\|\nabla w\|_{L^p}^p+C_d\|\nabla \su\|_{L^\infty}\|\nabla w\|_{L^p}^p +\|\nabla w\|_{L^p}^{p-1}\|\nabla\T(w,\su+u+v) \|_{L^p}\nonumber
  \\&+\|\nabla w\|_{L^p}^{p-1}\|w\|_{L^p}\big(\|\nabla^2 u\|_{L^\infty}+\|\nabla^2 v\|_{L^\infty}\big)+\|\nabla w\|_{L^p}^p\big(\|\nabla u\|_{L^\infty}+\|\nabla v\|_{L^\infty}\big)\nonumber
  \\&+|\nabla w\|_{L^p}^{p-1}\|\nabla\big(u\cdot \nabla v+v\cdot \nabla u+2\T(u,v)\big)\|_{L^p}
\end{align}
where we denote $\nabla\w^{p-1}=(\w_{i,j}^{p-1})_{d\times d}$
and $A:B:=\sum_{i,j}a_{i,j}b_{i,j}$. Again using Proposition \ref{3t} for the matrix operator $\nabla\T$, by plug (\ref{egt1}),(\ref{egt2}) into (\ref{egw1}), we obtain
\begin{align*}
  \frac{d}{dt}\|\nabla w\|_{L^p} &\leq C(\|\nabla \su\|_{L^\infty}+\|\nabla u\|_{L^\infty}+\|\nabla v\|_{L^\infty})\|\nabla w\|_{L^p}+\|w\|_{L^p}\big(\|\nabla^2 u\|_{L^\infty}+\|\nabla^2 v\|_{L^\infty}\big)\nonumber
  \\&+\|\nabla\T(w,\su+u+v) \|_{L^p}+\|\nabla\big(u\cdot \nabla v+v\cdot \nabla u+2\T(u,v)\big)\|_{L^p}\nonumber
  \\&\leq C(\|\su,\nabla \su\|_{L^\infty}+\|u,\nabla u\|_{L^\infty}+\|v,\nabla v\|_{L^\infty})\|\nabla w\|_{L^p}\nonumber
  \\&+\big(\|\nabla^2 u\|_{L^\infty}+\|\nabla^2 v\|_{L^\infty}\big)\|w\|_{L^p}+\sL(u,v)
  \\ &\leq C\|u_0,v_0\|_{B^{s+1}_{p,r}}(\|w\|_{L^p}+\|\nabla w\|_{L^p})+\sL(u,v)
  \end{align*}
  Combining (3.14) and (3.17) yields that
  \begin{align*}
    \frac{d}{dt}\|w,\nabla w\|_{L^p} &\leq C\|u_0,v_0\|_{B^{s+1}_{p,r}}(\|w\|_{L^p}+\|\nabla w\|_{L^p})+2 \sL(u,v)
  \end{align*}
By Gronwall's inequality and (\ref{iw}) we complete the proof.
\end{proof}
\begin{remark}
  The proofs of Proposition \ref{df1} and \ref{df2} rely on the symmetry of $\T$, especially when it comes to getting simplified equations (\ref{del}) and (\ref{weq}). Most previous studies on the well-posedness of Euler-Poincaré equations use the bilinear form (\ref{lns}), the lack of symmetry makes the calculation complicated. Infact, when $d=1$ namely the Camassa-Holm equation has the transport form $\pa_tu+u\pa_xu=P(u,u)$ with $P(u,v)=-\pa_x(1-\pa_x^2)^{-1}\bigl(uv+\fr12(\pa_xu\pa_xv)\bigr)$ is symmetric by default. In this respect, our new form (\ref{eps}) is a more natural high-dimensional generalization of the CH equation. 
\end{remark}
\subsection{Construction of Perturbation Data}
For localization in the Fourier domain, we introduce the following bump function in the frequency space.
Let $\widehat{\phi}\in \mathcal{C}^\infty_0(\mathbb{R})$ be a non-negative and even function satisfy
\begin{numcases}{\widehat{\phi}(\xi)=}
1,&if $|\xi|\leq \frac{1}{4}$,\nonumber\\
0,&if $|\xi|\geq \frac{1}{2}$.\nonumber
\end{numcases}
and let
\begin{equation}\label{fg}
  \begin{cases}
    &f_n=2^{-ns-N}\big(\cos(\frac{17}{12}2^{n}x_1)\phi(x_1)\phi(x_2)\cdots\phi(x_d),0,\cdots,0\big)
    \\ &g_n=\big(2^{-n}\phi(x_1)\phi(x_2)\cdots\phi(x_d),0,\cdots,0\big).
    \end{cases}
  \end{equation} 
We define the perturbation data by adding a translation transform
    \begin{equation}\label{fgm}
      \begin{cases}
&f_n^m=f_n(x_1-m,x_2,\cdots,x_d)
\\&g_n^m=g_n(x_1-m,x_2,\cdots,x_d)
\end{cases}
\end{equation}
 Noting that $\widehat{f_n^m}$ is supported in $[-\frac12,\frac12]^d\pm(\frac{17}{12}2^{n},0,\cdots,0),$ this support set is completely covered by the ring $C_n= \{\xi\in\mathbb{R}^d:\frac432^{n}\leq|\xi|\leq \frac322^{n}\}.$ Thus, by the definition of $\Delta_j,$ we know
\begin{numcases}{\Delta_j(f_n)=}
f_n^m, &if $j=n$,\nonumber\\
0, &if $j\neq n$.
\end{numcases}
On account of above and the definition of Besov space, we can show that for $k\in\R$ 
\begin{align}\label{bfn}
\|f_n^m\|_{B^{s+k}_{p,r}}\leq C2^{kn-N} \qquad and\qquad \|g_n^m\|_{B^{s+k}_{p,r}}\to 0\qquad for~ n\rightarrow  \infty
\end{align}
By the previous work \cite{ldl} and translation invariance of the system (\ref{eps}), we know that, for the corresponding solutions $S_t(f_n^m+g_n^m)$ and $S_t(f_n^m)$ there is a positive constant $c_0$ and a small time $T_0$, such that for any $t\in[0,T_0]$,
\begin{align}\label{sfg}
\liminf_{n\to\infty} \|S_t(f_n^m+g_n^m)-S_t(f_n^m)\|_{B^{s}_{p,r}}\geq c_0t.
\end{align}
\subsection{Proof of Theorem \ref{T}}
Roughly speaking, our proof of Theorem \ref{T} based on the following approximation
\begin{align}
 \qquad \qquad S_t(u_0+f_n^m+g_n^m)&-S_t(u_0+f_n^m)\tag{I}\label{A}
  \\ = S_t(S_nu_0+f_n^m+g_n^m)&-S_t(S_nu_0+f_n^m)+\mathcal{E}_n^m \tag{II}\label{B}
  \\ = \Big(S_t(S_nu_0)+S_t(f_n^m+g_n^m)\Big)&-\Big(S_t(S_nu_0)+S_t(f_n^m)\Big)+\mathcal{E}_{n,m} \nonumber
  \\=S_t(f_n^m+g_n^m)&-S_t(f_n^m)+\mathcal{E}_{n,m}, \tag{III}\label{C}
\end{align}
with some small error terms $\mathcal{E}_n^m,~\mathcal{E}_{n,m}$. More precisely, we devide (\ref{A}) into three parts
\begin{align}\label{app}
  \begin{split}
 &S_t(u_0+f_n^m+g_n^m)-S_t(u_0+f_n^m)=\\
 &\underbrace{\Big(S_t(u_0+f_n^m+g_n^m)-S_t(S_nu_0+f_n^m+g_n^m)\Big)-\Big(S_t(u_0+f_n^m)-S_t(S_nu_0+f_n^m)\Big)}_{\mathcal{E}_{n}^m}+\\
 &\underbrace{\Big(S_t(S_nu_0+f_n^m+g_n^m)-S_t(S_nu_0)-S_t(f_n^m+g_n^m)\Big)-\Big(S_t(S_nu_0+f_n^m)-S_t(S_nu_0)-S_t(f_n^m)\Big)}_{\mathcal{E}_{n,m}}\\ 
&+ S_t(f_n^m+g_n^m)-S_t(f_n^m).
\end{split}
 \end{align}
 We proof the approximation $(\ref{C})\to(\ref{B})\to(\ref{A})$ in the following sense.
 \begin{proposition}\label{err12}
  Let $f_n^m,~g_n^m$ be the perturbation data defined by (\ref{fg}) and (\ref{fgm}), then for any initial data $u_0\in B^{s}_{p,r}$ with $\|u_0\|_{B^{s}_{p,r}}=\ro$, the error terms $\mathcal{E}_n^m,~\mathcal{E}_{n,m}$ in (\ref{app}) satisfy 
 \begin{align}
  &\sup_{m,t}\|\mathcal{E}_n^m\|_{B^{s}_{p,r}}\leq C_\ro\|(I-S_n)u_0\|_{B^{s}_{p,r}},\label{err1}\\ &\lim_{m\to\infty}\bigl(\sup_{0\leq t\leq T}\|\mathcal{E}_{n,m}\|_{B^{s}_{p,r}}\bigr)=0\quad for~any~fixed~n.\label{err2}
   \end{align}.
  \end{proposition}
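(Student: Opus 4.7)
Each error in (\ref{app}) is a difference of two differences, so I would bound each inner pair and apply the triangle inequality. For $\mathcal E_n^m$ both inner pairs share the same initial-data gap $(I-S_n)u_0$ and are controlled by the stability estimate (\ref{del2}); the key observation is that the $2^{-n}$ decay of $\|(I-S_n)u_0\|_{B^{s-1}_{p,r}}$ cancels exactly against the $2^n$ growth of the $B^{s+1}_{p,r}$-norm of the reference solution coming from $\|S_nu_0\|_{B^{s+1}_{p,r}}\lesssim 2^n\rho$. For $\mathcal E_{n,m}$ each inner difference has the form $S_t(a+b)-S_t(a)-S_t(b)$, which is precisely the object Proposition \ref{df2} controls, so the task reduces to showing that $\int_0^T W_{2,p}(S_\tau(S_n u_0),S_\tau(h))d\tau\to0$ as $m\to\infty$ for $h=f_n^m+g_n^m$ and $h=f_n^m$; this I would obtain from the translation invariance of (\ref{eps}) in $x_1$.

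\textbf{Proof of (\ref{err1}).} Set
\[
\delta_1=S_t(u_0+f_n^m+g_n^m)-S_t(S_nu_0+f_n^m+g_n^m),\quad \delta_2=S_t(u_0+f_n^m)-S_t(S_nu_0+f_n^m),
\]
so that $\mathcal E_n^m=\delta_1-\delta_2$ and $\delta_i(0)=(I-S_n)u_0$. Each of the four initial data has $B^s_{p,r}$-norm at most $\rho+C(2^{-N}+o_n(1))\leq C_\rho$, so Lemma \ref{hlocal} produces a common lifespan $T=T(\rho)$ on which the four solutions are uniformly bounded in $B^s_{p,r}$. From Proposition \ref{df1}(1) and the elementary inequality $\|(I-S_n)u_0\|_{B^{s-1}_{p,r}}\leq 2^{-n}\|(I-S_n)u_0\|_{B^s_{p,r}}$ (a direct consequence of $\sum_{j\geq n}2^{(s-1)jr}\|\Delta_ju_0\|^r\leq 2^{-nr}\sum_{j\geq n}2^{sjr}\|\Delta_ju_0\|^r$), one gets $\|\delta_i(\tau)\|_{B^{s-1}_{p,r}}\leq C_\rho 2^{-n}\|(I-S_n)u_0\|_{B^s_{p,r}}$. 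Propagation of higher regularity at Besov level $s+1$ yields $\|v_i(\tau)\|_{B^{s+1}_{p,r}}\leq C_\rho\|v_i(0)\|_{B^{s+1}_{p,r}}\leq C_\rho 2^n$, where $v_i$ denotes the second solution in each pair. Substituting these bounds into (\ref{del2}), the integrand $\|\delta_i\|_{B^{s-1}_{p,r}}\|\nabla v_i\|_{B^s_{p,r}}$ is majorized by $C_\rho\|(I-S_n)u_0\|_{B^s_{p,r}}$ uniformly in $\tau\in[0,T]$, and (\ref{err1}) follows after a final triangle inequality.

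\textbf{Proof of (\ref{err2}).} Decompose $\mathcal E_{n,m}=W_1-W_2$ with $W_j:=S_t(S_nu_0+h_j)-S_t(S_nu_0)-S_t(h_j)$, where $h_1=f_n^m+g_n^m$ and $h_2=f_n^m$. For fixed $n$, the frequency localization of $S_nu_0$ and (\ref{bfn}) give $\|S_nu_0,h_j\|_{B^{s+1}_{p,r}}\leq C(n,\rho)$, so Proposition \ref{df2} delivers
\[
\sup_{t\in[0,T]}\|W_j(t)\|_{B^s_{p,r}}\leq C(n,\rho)\Big(\int_0^T W_{2,p}(S_\tau(S_nu_0),S_\tau(h_j))d\tau\Big)^\theta,\quad \theta=\tfrac1{s+1}.
\]
Translation invariance of (\ref{eps}) in $x_1$ gives $S_\tau(h_j)(x)=H_j(\tau,x-me_1)$ with $e_1=(1,0,\ldots,0)$ and $H_j$ the fixed $m$-independent solution starting from $f_n+g_n$ resp.\ $f_n$. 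Both $S_\tau(S_nu_0)$ and $H_j(\tau,\cdot)$ belong to $B^{s+1}_{p,r}\hookrightarrow W^{2,p}\cap W^{2,\infty}$ uniformly on $[0,T]$. The key analytic input is then the observation that, for $F,G\in L^p\cap L^\infty$ with $1<p<\infty$,
\[
\|F(\cdot)\,G(\cdot-me_1)\|_{L^p}\to 0\quad\text{as }m\to\infty,
\]
which I would prove by splitting $\R^d=\{|y|\leq R\}\cup\{|y|>R\}$, using $\|G\mathbf 1_{|y|>R}\|_{L^p}\to 0$ to handle the outer piece and the absolute continuity of $\int_{|z-me_1|\leq R}|F|^p$ as $m\to\infty$ to handle the inner one. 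Applied to every summand in $W_{2,p}(S_\tau(S_nu_0),S_\tau(h_j))$ this yields pointwise-in-$\tau$ convergence to $0$; the uniform bound $W_{2,p}(\cdots)\leq C(n,\rho)$ together with dominated convergence then gives $\int_0^T W_{2,p}\to 0$, which proves (\ref{err2}). The main obstacle is the delicate book-keeping of the $n$-dependent constant $C(n,\rho)$ in (\ref{err2}): for fixed $n$ its finiteness must combine with a vanishing integral so that the product tends to zero as $m\to\infty$, and this is why (\ref{err2}) is only claimed for each fixed $n$ rather than uniformly in $n$.
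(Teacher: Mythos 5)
Your proposal is correct and follows essentially the same two-step route as the paper: for (\ref{err1}), the same pairing with Proposition \ref{df1}, the observation $\|(I-S_n)u_0\|_{B^{s-1}_{p,r}}\leq 2^{-n}\|(I-S_n)u_0\|_{B^s_{p,r}}$, and propagation of $B^{s+1}_{p,r}$-regularity to absorb the $2^n$ factor; for (\ref{err2}), the same reduction via Proposition \ref{df2} to the vanishing of $\int_0^T W_{2,p}$ under translation, using the $x_1$-translation invariance of the flow. The one small point where you differ is in establishing $\|F(\cdot)G(\cdot-me_1)\|_{L^p}\to 0$: the paper invokes pointwise decay of $S_t(f_n)$ at infinity plus dominated convergence, whereas your inner/outer splitting argument only uses $F\in L^p\cap L^\infty$ and $G\in L^p\cap L^\infty$ (both available from $B^{s+1}_{p,r}\hookrightarrow W^{2,p}\cap W^{2,\infty}$), which is a slightly more economical justification since it avoids having to prove that $\partial^b S_t(f_n)$ actually decays pointwise; this is a cosmetic improvement rather than a different method.
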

\begin{proof}
We first to handle (\ref{err1}). Using proposition \ref{df1} with $\delta(t)=S_t(u_0+f_n^m)-S_t(S_nu_0+f_n^m)$, as $\|u_0+f_n^m\|_{B^s_{p,r}}\approx \|S_nu_0+f_n^m\|_{B^s_{p,r}}$ for $ m\in\R$ and $n\gg 1$, the solution sequences have a common lifespan $T\approx T^*(\|u_0\|_{B^s_{p,r}})$,  then for any $t\in[0,T)$ we have
\begin{align}\label{dif1}
  \|\delta(t)\|_{B^s_{p,r}} \leq &\Big(\|(I-S_n)u_0\|_{B^s_{p,r}}+\int_0^t\|\delta(\tau)\|_{B^{s-1}_{p,r}}\|\nabla S_t(S_nu_0+f_n^m)\|_{B^s_{p,r}}d\tau\Big)\nonumber\\
  &\cdot\exp\big(\int_0^t\|S_t(u_0+f_n^m),S_t(S_nu_0+f_n^m)\|_{B^{s}_{p,r}}d\tau\big)\nonumber\\
  \leq &\Big(\|(I-S_n)u_0\|_{B^s_{p,r}}+\int_0^t\|\delta(\tau)\|_{B^{s-1}_{p,r}}\|S_nu_0+f_n^m\|_{B^{s+1}_{p,r}}d\tau\Big)\nonumber\\
  &\cdot\exp\big(\int_0^t\|u_0+f_n^m,S_nu_0+f_n^m\|_{B^{s}_{p,r}}d\tau\big)\nonumber\\
  \leq &C_{\ro}\Big(\|(I-S_n)u_0\|_{B^s_{p,r}}+\int_0^t\|\delta(\tau)\|_{B^{s-1}_{p,r}}\cdot 2^nd\tau\Big)
\end{align}
and
\begin{align}\label{dif2}
  \|\delta(t)\|_{B^{s-1}_{p,r}} \leq &\|(I-S_n)u_0\|_{B^{s-1}_{p,r}}\exp\big(\int_0^t\|S_t(u_0+f_n^m),S_t(S_nu_0+f_n^m)\|_{B^{s}_{p,r}}d\tau\big)\nonumber\\
  \leq &C_{\ro}2^{-n}\|(I-S_n)u_0\|_{B^s_{p,r}}
\end{align}
take (\ref{dif2}) into (\ref{dif1}) we get 
\begin{align}\label{dif3}
  \|\delta(t)\|_{B^{s}_{p,r}} \leq C_\ro\|(I-S_n)u_0\|_{B^{s}_{p,r}}.
\end{align}
As in (\ref{dif3}) the $C_\ro$ not depend on the translation parameter $m$ and $t\in[0,T]$, then we have
\begin{align}\label{dif0}
 \sup_{m,t}\|S_t(u_0+f_n^m)-S_t(S_nu_0+f_n^m)\|_{B^{s}_{p,r}}=\sup_{m,t}\|\delta(t)\|_{B^{s}_{p,r}}\leq C_\ro\|(I-S_n)u_0\|_{B^{s}_{p,r}}.
\end{align}
 With exactly the same argument, we can deduce that
 $$\sup_{m,t}\|S_t(u_0+f_n^m+g_n^m)-S_t(S_nu_0+f_n^m+g_n^m)\|_{B^{s}_{p,r}}\leq C_\ro\|(I-S_n)u_0\|_{B^{s}_{p,r}},$$ 
 along with (\ref{dif0}), we complete the proof of (\ref{err1}).

 In order to deduce (\ref{err2}), we should use Proposition \ref{df2} with the setting 
 $\su(t)=S_t(S_nu_0+f_n^m), u(t)=S_t(S_nu_0)$ and $v(t)=S_t(f_n^m),$ and denote 
 $$w=\su-u-v=S_t(S_nu_0+f_n^m)-S_t(S_nu_0)-S_t(f_n^m).$$
 Since $u_0\in B^{s}_{p,r}$ and $\|f_n^m\|_{B^{s}_{p,r}}\approx 1$, it's easy to see that
 \begin{align}
  \|S_nu_0, f_n^m\|_{B^{s+1}_{p,r}}\leq C_\ro2^n,
 \end{align}
with $C_\ro$ only depend on $\ro:=\|u_0\|_{B^{s}_{p,r}}$, Then from Proposition \ref{df2} we know that
\begin{align}\label{diff1}
  \|w(t)\|_{B^{s}_{p,r}}\leq C_\ro 2^ne^{C_\ro 2^n\theta}\Bigl(\sum_{0\leq |a|,|b|\leq 2}\int_0^t\|\partial^aS_t(S_nu_0)\partial^bS_t(f_n^m)\|_{L^p}d\tau\Bigr)^\theta.
\end{align}
Notice that, by definition $\partial^bS_t(f_n^m)=\partial^bS_t(f_n(x_1-m,\cdots,x_d))=\partial^bS_t(f_n)(x_1-m,\cdots,x_d)$, for fixed $n$ and any $(t, x)$, considering that $S_t(f_n)$ is a smooth function decay at infinity, we have
\begin{align*}
  &\lim_{m\to\infty}\partial^aS_t(S_nu_0)(x)\partial^bS_t(f_n)(x_1-m,\cdots,x_d)=0,\\
  &|\partial^aS_t(S_nu_0)(x)\partial^bS_t(f_n)(x_1-m,\cdots,x_d)|\leq M|\partial^aS_t(S_nu_0)|(\tau,x)\in L^1\bigl([0,T],L^p(\R)\bigr). 
\end{align*}
By the Lebesgue Dominated Convergence Theorem, we have
\begin{align}\label{diff2}
  \lim_{m\to\infty}\int_0^T\|\partial^aS_t(S_nu_0)\partial^bS_t(f_n^m)\|_{L^p}d\tau=0
\end{align}
Then from (\ref{diff1}),(\ref{diff2}) we know that, for the fixed $n$ and any $t\in[0,T]$
\begin{align}\label{diff3}
  \lim_{m\to\infty}\sup_{0\leq t\leq T}\|w(t)\|_{B^{s}_{p,r}}=\lim_{m\to\infty}\sup_{0\leq t\leq T}\|S_t(S_nu_0+f_n^m)-S_t(S_nu_0)-S_t(f_n^m)\|_{B^{s}_{p,r}}=0,
\end{align}
with the same argument, we can also get that, for any fixed $n$ and $t\in[0,T]$
\begin{align}\label{diff4}
  \lim_{m\to\infty}\sup_{0\leq t\leq T}\|S_t(S_nu_0+f_n^m+g_n^m)-S_t(S_nu_0)-S_t(f_n^m+g_n^m)\|_{B^{s}_{p,r}}=0.
\end{align}
Combining (\ref{diff3}) and (\ref{diff4}), this yields (\ref{err2}).
\end{proof}
With (\ref{sfg}), (\ref{app}) and Propositions \ref{err12} in hand, we can complete our proof of Theorem \ref{T}. First of all, from the identity (\ref{app}) we know that for any time $t\in[0,T]$
\begin{align}\label{app3}
  \begin{split}
  &\quad\|S_t(u_0+f_n^m+g_n^m)-S_t(u_0+f_n^m)\|_{B^{s}_{p,r}}\\
  &\geq \|S_t(f_n^m+g_n^m)-S_t(f_n^m)\|_{B^{s}_{p,r}}
  -\sup_{m,t}\|\mathcal{E}_{n}^m\|_{B^{s}_{p,r}}-\sup_{0\leq t\leq T}\|\mathcal{E}_{n,m}\|_{B^{s}_{p,r}}
\end{split}
\end{align}
Then, by (\ref{err2}) in Proposition \ref{err12}, for the fixed $n$, we can find a sufficiently large $m_n$ such that
\begin{align*}
  \sup_{0\leq t\leq T}\|\mathcal{E}_{n,m_n}\|_{B^{s}_{p,r}}\leq 2^{-n}.
\end{align*}
 combining this and (\ref{err1}) in Proposition \ref{err12}, by (\ref{app3}) we get
\begin{align}\label{inf}
  &\quad\|S_t(u_0+f_n^{m_n}+g_n^{m_n})-S_t(u_0+f_n^{m_n})\|_{B^{s}_{p,r}}\nonumber\\
  &\geq \|S_t(f_n^{m_n}+g_n^{m_n})-S_t(f_n^{m_n})\|_{B^{s}_{p,r}}-C_\ro\|(I-S_n)u_0\|_{B^{s}_{p,r}}-2^{-n}.
\end{align}
As $u_0\in B^{s}_{p,r}$, that means $\|(I-S_n)u_0\|_{B^{s}_{p,r}}\to 0$ when $n\to\infty$. For the small $t\in[0,T_0]$, we already have (\ref{sfg}), it follows from (\ref{inf}) that
\begin{align}
  \liminf_{n\to\infty}\|S_t(u_0+f_n^{m_n}+g_n^{m_n})-S_t(u_0+f_n^{m_n})\|_{B^{s}_{p,r}}\geq c_0t,\quad \forall t\in[0,T_0].
\end{align}
And the sequences of initial data satisfy
\begin{align}
  \lim_{n\to\infty}\|(u_0+f_n^{m_n}+g_n^{m_n})-(u_0+f_n^{m_n})\|_{B^{s}_{p,r}}=\lim_{n\to\infty}\|g_n^{m_n}\|_{B^{s}_{p,r}}=0.
\end{align}
This complete the proof Theorem \ref{T}.

\vspace*{1em}
\noindent\textbf{Acknowledgements.}
M. Li was supported by Educational Commission Science Programm of Jiangxi Province
(No. GJJ190284) and Natural Science Foundation of Jiangxi Province (No. 20212BAB211011 and 20212BAB201008). 


\end{document}